\def\Z{\mathbb{Z}}
\def\Q{\mathbb{Q}}
\def\R{\mathbb{R}}
\def\H{\mathbb{H}}
\def\C{\mathbb{C}}
\DeclareMathOperator{\im}{Im}
\DeclareMathOperator{\re}{Re}
\DeclareMathOperator{\Tr}{Tr}
\DeclareMathOperator{\Ai}{Ai}
\DeclareMathOperator{\arccosh}{arccosh}
\def\SL{{\rm SL}}
\def\PSL{{\rm PSL}}
\newcommand{\pfrac}[2]{\left(\frac{#1}{#2}\right)}
\newcommand{\pmfrac}[2]{\left(\mfrac{#1}{#2}\right)}
\renewcommand{\pmatrix}[4]{\left(\begin{smallmatrix}#1 & #2 \\ #3 & #4\end{smallmatrix}\right)}
\renewcommand{\bar}[1]{\overline{#1}}
\newcommand{\sumprime}{\sideset{}{'}\sum}
\newcommand{\fS}{\mathfrak{S}}
\DeclareMathOperator{\sgn}{sgn}
\def\ep{\varepsilon}
\newtheorem{theorem}{Theorem}[section]
\newtheorem{lemma}[theorem]{Lemma}
\newtheorem{corollary}[theorem]{Corollary}
\newtheorem{proposition}[theorem]{Proposition}
\theoremstyle{remark}
\newtheorem*{remark}{Remark}
\numberwithin{equation}{section}
\begin{document}

\begin{frontmatter}[classification=text]

\title{Asymptotic Distribution of Traces of Singular Moduli} 

\author[nick]{Nickolas Andersen\thanks{Supported by NSF grant DMS-1701638}}
\author[bill]{William Duke\thanks{Supported by NSF grant DMS-1701638}}

\begin{abstract}
	We determine the asymptotic behavior of twisted traces of singular moduli with a power-saving error term in both the discriminant and the order of the pole at $i\infty$.
	Using this asymptotic formula, we obtain an exact formula for these traces involving the class number and a finite sum involving the exponential function evaluated at CM points.
\end{abstract}
\end{frontmatter}

\maketitle

\section{Introduction}

The modular $j$-function
\begin{align}
	j(z) &= q^{-1} \prod\limits_{n=1}^\infty (1-q^n)^{-24}\Big(1+240\sum\limits_{n=1}^\infty \sum\limits_{m\mid n}m^3 q^n\Big)^3 \\
		&=  q^{-1} + 744 + 196884q + \ldots = \sum_{n=-1}^\infty c(n)q^n, \qquad q=e(z)=e^{2\pi i z},
\end{align}
is of fundamental importance in number theory.
These Fourier coefficients $c(n)$ are integral linear combinations of the dimensions of the irreducible representations of the monster group\footnote{For a comprehensive survey of this and related results, see \cite{moonshine} and the many references therein.} and its values at imaginary quadratic irrationalities are algebraic integers with well-known properties in class field theory.

In the 1930s, Petersson \cite{petersson} and Rademacher \cite{rademacher-j} independently discovered the formula
\begin{equation} \label{eq:c-n-formula}
	c(n) = 2\pi n^{-\frac 12} \sum_{c=1}^\infty \frac{K(n,c)}c I_{1}\pfrac{4\pi\sqrt n}{c},
\end{equation}
where $I_\nu$ is the $I$-Bessel function and $K(n,c)$ is the ordinary Kloosterman sum
\begin{equation}
	K(n,c) = \sum_{\substack{d\bmod c \\ (c,d)=1}} e\pfrac{\bar d + nd}{c}, \qquad d\bar d\equiv 1\pmod{c}.
\end{equation}
In his paper, Rademacher remarked that without \eqref{eq:c-n-formula}, the $c(n)$ ``can be found\ldots by troublesome computations, which for higher $n$ are practically inexecutable.''\footnote{He also commented that the coefficients ``do not seem to have attracted much attention before.'' Certainly the state of the subject has changed somewhat in the intervening years.}
The convergence of \eqref{eq:c-n-formula} is quite slow.
However, (in principle, at least) the formula can be used to compute $c(n)$ using a finite number of terms of the series since one knows a priori that $c(n)\in \Z$. 
The Weil bound $|K(n,c)|\leq \sigma_0(c)\sqrt c$, where $\sigma_0$ is the sum of divisors function, is enough to show that $C\sqrt n$ terms suffice as long as $n$ is large enough compared to $C$.

Also of great interest are certain special values of $j$.
For each negative fundamental discriminant $d$, let $z_d$ be the point in the complex upper half-plane $\H$ given by
\begin{equation}
	z_d = 
	\begin{cases}
		\tfrac 12 \sqrt{d} & \text{ if $d\equiv 0\pmod{4}$}, \\
		-\tfrac 12 + \tfrac 12 \sqrt{d} & \text{ if $d\equiv 1\pmod{4}$}.
	\end{cases}
\end{equation}
From the classical theory of complex multiplication we know that the singular moduli $j_1(z_d)$, where $j_1=j-744$, 
are algebraic integers of degree $h(d)$, the class number of $\Q(\sqrt{d})$.
In particular, the algebraic trace $\Tr j_1(z_d)$, i.e.~the sum of the Galois conjugates of $j_1(z_d)$, is an integer.

Kaneko \cite{kaneko-1,kaneko-2} expressed the coefficients $c(n)$ in terms of the traces $\Tr j_1(z_d)$ using a result of Zagier \cite{zagier-traces}, who showed that the traces appear as coefficients of weakly holomorphic modular forms of half-integral weight.
Zagier's results sparked significant interest in traces of singular moduli which has led to numerous papers on the subject (of which we will only mention a few).
Bruinier, Jenkins, and Ono \cite{bjo} proved a formula like \eqref{eq:c-n-formula} for the traces involving half-integral weight Kloosterman sums, and computations led them to conjecture the limit
\begin{equation} \label{eq:duke-24-original} 
	\lim_{d\to-\infty} \frac{1}{h(d)} \left(\Tr j_1(z_d) - \sum_{z_Q\in \mathcal R(1)} e(-z_Q)\right) = -24,
\end{equation}
where $\mathcal R(Y)$ is the rectangle
\begin{equation}
	\mathcal R(Y) = \left\{z=x+iy\in \H : -\tfrac 12 \leq x < \tfrac 12 \text{ and } y>Y\right\}.
\end{equation}
Their conjecture was confirmed by the second author in \cite{duke-24}.
The limit \eqref{eq:duke-24-original} converges very slowly.
We will see in Corollary~\ref{cor} below that by modifying and lengthening the exponential sum we can obtain a more quickly convergent limit; in particular for any $\ep>0$, the trace $\Tr j_1(z_d)$ is the nearest integer to the quantity
\begin{equation} \label{eq:nearest-int}
	-24h(d) + \sum_{z_Q\in \mathcal R(|d|^{-1+\epsilon})} \left( e(-z_Q) - e(-\bar z_Q) \right)
\end{equation}
as long as $|d|$ is sufficiently large with respect to $\ep$.
For example, when $d=-303$, we have $h(d) = 10$ and
\begin{equation}
	\Tr j_1(z_{-303}) = -561\,766\,949\,784\,377\,042\,888\,940,
\end{equation}
while \eqref{eq:nearest-int} with $\ep=\frac 1{100}$ equals
\begin{equation}
	-561\,766\,949\,784\,377\,042\,888\,939.643\ldots.
\end{equation}

In our main theorems we treat traces of any weakly holmorphic modular form with integer coefficients and give estimates that are uniform in the order of its pole at $i\infty$.
We also allow twists by genus characters.
The main series from which these results originate (\eqref{eq:trace-sinh} below) resembles \eqref{eq:c-n-formula}, except that it involves half-integral weight Kloosterman sums. 
However, for our results it is not enough to use the Weil bound for Kloosterman sums to prove either \eqref{eq:duke-24-original} or our improved results.
The proof of \eqref{eq:duke-24-original} in \cite{duke-24} uses the uniform distribution of CM points \cite{duke-half-integral}, but to obtain stronger estimates it is essential to measure the cancellation among the Kloosterman sums directly.

\section{Statement of Results}

Our results improve and generalize the limit formula \eqref{eq:duke-24-original}.
We begin by fixing notation and explaining the more general setting.
Each conjugate of $j_1(z_d)$ is of the form $j_1(z_Q)$, where $Q=[a,b,c]$ is a positive definite integral binary quadratic form of discriminant $d=b^2-4ac$, and
\begin{equation}
	z_Q = \frac{-b+\sqrt{d}}{2a}.
\end{equation}
In fact, we can choose $z_Q\in \mathcal F$, where
\begin{equation}
	\mathcal F = \left\{ z\in \H : -\tfrac 12 \leq \re(z) \leq 0 \text{ and } |z|\geq 1 \right\} \cup \left\{ z\in \H : 0 < \re(z) < \tfrac 12 \text{ and } |z| > 1 \right\}
\end{equation}
is the usual fundamental domain for the action of $\PSL_2(\Z)$ on $\H$.
This point of view leads to a straightforward generalization to non-fundamental discriminants $d\equiv 0,1\pmod{4}$ by defining
\begin{equation}
	\Tr j_1(z_d) = \sumprime_{z_Q\in \mathcal F} j_1(z_Q),
\end{equation}
where $Q$ runs over all positive definite integral binary quadratic forms of discriminant $d$ with $z_Q\in \mathcal F$.
The primed sum indicates that terms are weighted by $1/\omega_Q$, where $\omega_Q=1$ unless $Q$ is $\PSL_2(\Z)$-equivalent to $[a,0,a]$ or $[a,a,a]$, in which case it equals $2$ or $3$, respectively.

The main result of \cite{duke-24} determines the asymptotic behavior of $\Tr f(z_d)$ for any $f\in \C[j]$.
A convenient basis for $\C[j]$ is given by the functions $j_m = P_m(j)$, where $P_m(x)$ are the Faber polynomials defined by the property that $j_m = q^{-m}+O(q)$.
The first few basis elements are $j_0 = 1$, $j_1 = j-744$, and
\begin{align}
	j_2 &= j^2 - 1488j + 159768, \\
	j_3 &= j^3 - 2232 j^2 + 1069956 j - 36866976.
\end{align}
We generalize the results of \cite{duke-24} by considering sums twisted by genus characters.
For any factorization $D=dd'$ of the negative discriminant $D$, where $d$ is a (positive or negative) fundamental discriminant and $d'$ is a discriminant, there is an associated character
\begin{equation}
	\chi_d(Q) = 
	\begin{cases}
		\pfrac dn & \text{ if $(a,b,c,d)=1$ and $Q$ represents $n$ and $(d,n)=1$}, \\
		0 & \text{ if $(a,b,c,d)>1$}.
	\end{cases}
\end{equation}
We say that $Q$ represents $n$ if $n=ax^2+bxy+cy^2$ for some $x,y\in \Z$.
We define the twisted traces of $j_m$ by
\begin{equation}
	\Tr_d j_m(z_D) = \sumprime_{z_Q\in \mathcal F} \chi_d(Q) j_m(z_Q).
\end{equation}
Let $\delta_1=1$ and $\delta_d=0$ otherwise, and let $\sigma_1(m)$ denote the sum of the divisors of $m$.
Finally, let $\theta\in [0,\frac 7{64}]$ be an admissible exponent toward the Ramanujan conjecture for Maass cusp forms of integral weight.

\begin{theorem} \label{thm:twisted-m-1}
	For each negative discriminant $D$, let $d$ be any fundamental discriminant dividing $D$. Then for each $m\geq 1$ we have
	\begin{equation} \label{eq:twisted-m-1}
		\Tr_d j_m(z_D) - \sum_{z_Q\in \mathcal R(\frac 1m)} \chi_d(Q) e(-mz_Q) = -24\delta_d\sigma_1(m)h(D) + O\left(|D|^{\frac {17}{36}+\ep}m^{\frac 56+\frac\theta3+\ep}\right).
	\end{equation}
\end{theorem}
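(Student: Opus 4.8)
\medskip
\noindent\emph{Plan of proof.}
The starting point is the exact identity \eqref{eq:trace-sinh}, which, in analogy with \eqref{eq:c-n-formula}, expresses the left-hand side of \eqref{eq:twisted-m-1} --- the twisted trace with the rectangle sum $\sum_{z_Q\in\mathcal R(1/m)}\chi_d(Q)e(-mz_Q)$ already subtracted, which is precisely what removes the exponentially large contributions to the trace --- as a Kloosterman--Bessel series $\sum_c c^{-1}\mathcal S_d(m,D;c)\,I_{1/2}(\,\cdot\,/c)$ built from half-integral weight, genus-character twisted Kloosterman sums $\mathcal S_d(m,D;c)$ and the Bessel function $I_{1/2}$ (whence ``$\sinh$'', since $I_{1/2}(x)=\sqrt{2/(\pi x)}\,\sinh x$). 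It then remains to (i) extract from this series the main term $-24\delta_d\sigma_1(m)h(D)$ and (ii) bound everything else by $O(|D|^{17/36+\ep}m^{5/6+\theta/3+\ep})$. Step (i) is structural: the main term is the ``constant-term'' part of the series --- the Hurwitz class number $h(D)$ comes from the Fourier coefficients of Zagier's weight-$\tfrac32$ Eisenstein series, the divisor sum $\sigma_1(m)$ from the weight-$2$ Eisenstein series underlying the generating function of the $j_m$, and $\delta_d$ reflects the vanishing of the relevant Eisenstein period for a nontrivial genus character.

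Step (ii) is the heart of the matter. The Weil bound $|\mathcal S_d(m,D;c)|\ll c^{1/2+\ep}$ barely fails to make the series converge absolutely, so one must exploit cancellation among the Kloosterman sums directly --- rather than, as in \cite{duke-24}, via equidistribution of CM points. The plan is to feed the Bessel-weighted sum over $c$ into the Kuznetsov trace formula for half-integral weight (Proskurin's formula) and, via the Shimura correspondence, to rewrite it as a spectral sum: a bounded Eisenstein term plus a cuspidal sum $\sum_j\rho_j(m)\overline{\rho_j(|D|)}\,\widehat\phi(t_j)$ over Maass forms. This is bounded by Cauchy--Schwarz against the spectral large sieve $\sum_{|t_j|\le T}|\rho_j(n)|^2\ll_\ep(T^2+n^{1/2})(nT)^{\ep}$, uniform in $n$, together with the individual bound $\rho_j(n)\ll_\ep n^{\theta+\ep}$ and the bound $|\im t_j|\le\theta$ on any exceptional eigenvalues --- the exponent $\theta$ of the statement entering through these last two, since via Shimura the half-integral weight spectral data inherit the integral-weight Ramanujan bound. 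The genus-character twist is absorbed by the standard machinery, which factors $\mathcal S_d(m,D;c)$ and in effect raises the level, leaving the shape of the estimate unchanged. Finally one truncates the $c$-sum at a parameter $X$, bounds the tail $c>X$ by the decay of the test function and the range $c\le X$ spectrally, and optimizes $X$ to obtain the stated exponents $|D|^{17/36+\ep}$ and $m^{5/6+\theta/3+\ep}$.

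The principal obstacle is uniformity in the pole order $m$. The index $m$ lives inside the Kloosterman sums and, after Kuznetsov, reappears both as an index of a Maass coefficient $\rho_j(m)$ and inside the argument of the Bessel test function $\phi$, which lengthens the effective range of $c$ and hence enlarges the eigenvalue range that must be summed; treated crudely this costs a whole power of $m$ rather than the claimed $m^{5/6+\theta/3}$. Getting the right exponent requires a careful uniform (stationary-phase) analysis of how the Bessel transform $\widehat\phi$ depends on all of the parameters, so that the spectral sum can be truncated efficiently and balanced against the length of the $c$-sum. A subsidiary but genuine difficulty already underlies \eqref{eq:trace-sinh} itself: passing from the naive Rademacher expansion of the trace to a series with no exponentially large terms requires the full asymptotic expansion of $I_{1/2}$ in order to match, term by term in $c$, the exponential parts against the rectangle sum, and a lossy treatment of the subexponential remainders there would on its own already dominate the target error.
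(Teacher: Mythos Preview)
Your broad strategy---Kuznetsov for half-integral weight, Shimura correspondence, spectral bounds on Maass coefficients---matches the paper's. But there are two genuine gaps.

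First, you have misread \eqref{eq:trace-sinh}: it does \emph{not} already have the rectangle sum subtracted. It expresses $\Tr_d j_m(z_D)$ as $-24\delta_d\sigma_1(m)h(D)$ plus the \emph{full} $\sinh$-series over all $c$; the main term is already isolated there (it comes from $j_m=G_{-m}(\cdot,1)-24\sigma_1(m)$, not from any ``constant-term part'' of the Kloosterman series). The rectangle sum enters separately, via the elementary identity of Lemma~\ref{lem:R(Y)}, which matches the \emph{initial segment} $c<2m|D|^{1/2}$ of the $\sinh$-series exactly with $\sum_{z_Q\in\mathcal R(1/m)}\chi_d(Q)(e(-mz_Q)-e(-m\bar z_Q))$. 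No ``full asymptotic expansion of $I_{1/2}$'' is needed: $I_{1/2}(x)=\sqrt{2/(\pi x)}\sinh x$ is exact, and the matching is combinatorial. What remains is then the tail $c\geq 2m|D|^{1/2}$ plus the one-sided sum $\sum_{c\leq 2m|D|^{1/2}}T_m(d,d';c)\exp(-\tfrac{4\pi m}{c}|D|^{1/2})$, each handled by partial summation against Theorem~\ref{thm:Tm-sums}.

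Second, and more seriously, your spectral toolkit is not strong enough to reach $|D|^{17/36}$. The large sieve bound you quote is essentially Theorem~\ref{thm:mve-general}, and using only Cauchy--Schwarz against it yields at best the exponent that convexity gives---as the paper's own Remark after Theorem~\ref{thm:trace-R(Y)} notes, convexity produces $|D|^{1/2+\ep}$ in place of $|D|^{17/36+\ep}$. The improvement to $17/36$ comes from Young's Weyl-type hybrid subconvexity for $L(\tfrac12,v_j\times\chi_d)$, which via Waldspurger/Baruch--Mao controls $|\rho_j(d)|^2$ for $d$ fundamental and gives the bound of Theorem~\ref{thm:maass-young-est}; the paper takes the \emph{minimum} of this with the mean-value estimate. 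Relatedly, your individual bound $\rho_j(n)\ll n^{\theta+\ep}$ is not correct for half-integral weight: for a fundamental discriminant $d$, $\rho_j(d)$ is governed by a central $L$-value, not by $\theta$. The exponent $\theta$ enters only through the square part $vw$ when $(-1)^\lambda n=w^2 d$, via Hecke eigenvalues of the Shimura lift at primes dividing $w$---this is exactly how $(vw)^{\theta}$ appears in Theorem~\ref{thm:maass-young-est} and how $m^{\theta/3}$ survives into the final error.
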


In Theorem~\ref{thm:twisted-m-1} we have subtracted the quantity $\sum \chi_d(Q)e(-mz_Q)$ to provide an easier comparison with \eqref{eq:duke-24-original},
but our methods are optimized for the following modification.

\begin{theorem} \label{thm:trace-R(Y)}
	Let $D$, $d$, and $m$ be as in Theorem~\ref{thm:twisted-m-1}.
	Then for $0<Y\ll \frac 1m$ we have
	\begin{multline}
		\Tr_d j_m(z_D) - \sum_{z_Q\in \mathcal R(Y)} \chi_d(Q) \left(e(-mz_Q)-e(-m\bar z_Q)\right) 
		\\
		= -24\delta_d\sigma_1(m)h(D) + O\left(m|D|^{\frac 13}Y^{\frac 12} \left( Y^{-\frac 16} + |D|^{\frac 5{36}}m^{\frac 13(1+\theta)} \right)(m|D|/Y)^\ep\right).
	\end{multline}
\end{theorem}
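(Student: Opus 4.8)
The plan is to deduce Theorem~\ref{thm:trace-R(Y)} from the same exact formula for $\Tr_d j_m(z_D)$ that underlies Theorem~\ref{thm:twisted-m-1}, namely the Rademacher-type series (referred to in the introduction as \eqref{eq:trace-sinh}) expressing the trace as a main term $-24\delta_d\sigma_1(m)h(D)$ plus a sum over moduli $c$ of half-integral weight Kloosterman sums against an $I$-Bessel factor. The difference between the two theorems is entirely in how one peels off an explicit ``principal part'' from that series: in Theorem~\ref{thm:twisted-m-1} one subtracts $\sum_{z_Q\in\mathcal R(1/m)}\chi_d(Q)e(-mz_Q)$, which corresponds to truncating the $c$-sum at a point governed by when $z_Q$ enters the rectangle $\mathcal R(1/m)$; here we instead subtract the symmetrized quantity $\sum_{z_Q\in\mathcal R(Y)}\chi_d(Q)\bigl(e(-mz_Q)-e(-m\bar z_Q)\bigr)$, which is the natural object because $e(-mz_Q)-e(-m\bar z_Q)=2i\sin(\ldots)\,e^{2\pi m\im(z_Q)}$ — i.e., it captures exactly the $\sinh$ that appears in the series, matching the structure of \eqref{eq:trace-sinh}. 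So the first step is to write the exact formula, identify the ``geometric side'' sum over $z_Q$ in $\mathcal F\setminus\mathcal R(Y)$ (equivalently $\im z_Q\le 1/(2a)\le$ something in terms of $Y$) as the finite extraction, and be left estimating the tail.

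\emph{Key steps.} First, set up the exact identity and observe that subtracting the $\mathcal R(Y)$ sum removes precisely the terms with $c\le C_0$ for an appropriate cutoff $C_0$ depending on $m,D,Y$ (roughly $C_0\asymp \sqrt{m|D|}/$ something, or more precisely the transition happens where the argument $4\pi\sqrt{m|D|}/c$ of the Bessel function is of size comparable to the height threshold $Y$). Second, bound the remaining tail $\sum_{c>C_0}$ of half-integral-weight Kloosterman sums times $I_{1/2}$ (or $I_{-1/2}$, i.e., $\sinh$/$\cosh$ up to elementary factors) against the test function. Third — and this is the heart — estimate the head $\sum_{c\le C_0}$ after the subtraction: here one cannot just take absolute values and use the Weil bound (the paper emphasizes this), so one applies the bound on sums of Salié/half-integral Kloosterman sums coming from the spectral theory of Maass forms, which is where the Ramanujan exponent $\theta$ enters and produces the $|D|^{5/36}m^{\theta/3}$ factor; the ``trivial'' competing term $Y^{-1/6}$ comes from the Bessel-function/stationary-phase contribution near the transition range. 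Fourth, combine: the error is $m|D|^{1/3}Y^{1/2}$ times the max of these two contributions, and track the $\ep$-powers of $m|D|/Y$ from divisor bounds and the class-number bound $h(D)\ll|D|^{1/2+\ep}$.

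\emph{Main obstacle.} The crux is the estimation of the smoothly weighted sum $\sum_{c\le C_0} c^{-1}\,K_{1/2}(\ldots,c)\,\phi(c)$ of half-integral weight Kloosterman sums, where $\phi$ encodes the Bessel weight and the genus-character twist by $\chi_d$. Getting the exponent $5/36$ (rather than the $1/4$ one would get from Weil plus trivial bounds) requires opening the Kloosterman sums, relating them via Kohnen/Shimura-type or Katok--Sarnak machinery to Fourier coefficients of Maass forms of level dividing something involving $D$, and invoking a large-sieve / spectral-mean-value estimate uniform in the level; the $|D|$-uniformity there, together with correctly interfacing the twist $\chi_d$ with the relevant congruence conditions mod $c$, is the delicate point. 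A secondary technical nuisance is making the transition region between ``head'' and ``tail'' at $c\asymp C_0$ harmless — i.e., choosing the smooth truncation so that the subtracted finite sum $\sum_{z_Q\in\mathcal R(Y)}$ exactly matches the would-be main contribution of the small-$c$ terms up to an acceptable error, which is what forces the constraint $Y\ll 1/m$.

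Once these pieces are in place, the stated error term follows by balancing $Y^{-1/6}$ against $|D|^{5/36}m^{(1+\theta)/3}$ inside the common factor $m|D|^{1/3}Y^{1/2}$, and Theorem~\ref{thm:twisted-m-1} is recovered (with its exponents $17/36$ and $5/6+\theta/3$) by specializing $Y\asymp 1/m$ and converting the symmetrized sum back to $\sum\chi_d(Q)e(-mz_Q)$ at the cost of the elementary term $\sum e(-m\bar z_Q)$, which is itself absorbed into the stated error.
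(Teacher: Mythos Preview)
Your overall strategy matches the paper's: start from the exact $\sinh$-series \eqref{eq:trace-sinh}, recognize that the symmetrized sum $\sum_{z_Q\in\mathcal R(Y)}\chi_d(Q)(e(-mz_Q)-e(-m\bar z_Q))$ reproduces a truncation of that series, and control what remains using the spectral estimate for half-integral weight Kloosterman sums. However, your sketch misidentifies \emph{where} the work lies, and this is worth correcting because it affects how one actually executes the argument.

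In the paper, Lemma~\ref{lem:R(Y)} shows that the $\mathcal R(Y)$ sum is \emph{exactly} equal to the head $\sum_{4\mid c<\frac{2}{Y}|D|^{1/2}}T_m(d,d';c)\sinh(4\pi m|D|^{1/2}/c)$ --- no error, no smoothing, no transition region to make ``harmless.'' The map from $c$-terms to CM points $z_Q\in\mathcal R(Y)$ is a bijection, and the summand $\sinh(\cdot)$ is literally $\tfrac12(e(-mz_Q)-e(-m\bar z_Q))$. So your Step~3 (``estimate the head $\sum_{c\le C_0}$ after the subtraction'') is vacuous: after subtracting the $\mathcal R(Y)$ sum there is no head at all. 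Consequently your identification of the ``crux'' as a smoothly weighted head sum is misplaced, and the remarks about smooth truncation and balancing at $c\asymp C_0$ do not reflect what actually happens.

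All of the analytic work is in the \emph{tail}: one applies the Kloosterman-sum estimate (Theorem~\ref{thm:Tm-sums}, proved via Kuznetsov and subconvexity) together with partial summation to bound $\sum_{4\mid c\ge x}T_m(d,d';c)\sinh(4\pi m|D|^{1/2}/c)$ with $x=\tfrac{2}{Y}|D|^{1/2}$. The condition $Y\ll 1/m$ is exactly what guarantees $x\gg m|D|^{1/2}$, so that $\sinh(4\pi m|D|^{1/2}/c)\asymp m|D|^{1/2}/c$ on the tail and partial summation goes through cleanly; this is the source of that hypothesis, not any smoothing issue. The two pieces $Y^{-1/6}$ and $|D|^{5/36}m^{(1+\theta)/3}$ in the error both arise from substituting $x=\tfrac{2}{Y}|D|^{1/2}$ into the two terms $x^{1/6}$ and $|D|^{2/9}m^{(1+\theta)/3}$ of Theorem~\ref{thm:Tm-sums}.
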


\begin{remark}
	If we assume the Lindel\"of hypothesis for the central values $L(\tfrac 12,f\times \chi)$ and $L(\tfrac 12,\chi)$, where $f$ is a weight $0$ Maass cusp form and $\chi$ is a quadratic Dirichlet character, we can replace the exponent $\frac 5{36}$ in Theorem~\ref{thm:trace-R(Y)} by $\frac 1{12}$.
	The convexity bound would yield an exponent of $\frac 16$ instead.
	If we assume the Linnik-Selberg conjecture for half-integral weight Kloosterman sums (namely, that the sum in Theorem~\ref{thm:Sk-sums} is bounded above by $(|mn|x)^\ep$) we can replace the error term in Theorem~\ref{thm:trace-R(Y)} by $O(m|D|^{1/4}Y^{1/2})$.
\end{remark}

Choosing $Y$ small enough that the error term tends to zero, we obtain the following.

\begin{corollary} \label{cor}
	Let $D$, $d$ and $m$ be as in Theorem~\ref{thm:twisted-m-1} and let $Y = C m^{-A}|D|^{-B}$, where $A>3$, $B>1$, and $C>0$ are constants.
	Then $\Tr_d j_m(z_D)$ is the nearest integer to
	\begin{equation}
		-24 \delta_d \sigma_1(m) h(D) + \sum_{z_Q \in \mathcal R(Y)} \chi_d(Q) \left(e(-mz_Q) - e(-m\bar z_Q)\right)
	\end{equation}
	provided $m|D|$ is sufficiently large compared to $C$.
\end{corollary}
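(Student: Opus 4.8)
The plan is to apply Theorem~\ref{thm:trace-R(Y)} with the prescribed choice of $Y$ and check that the error term is strictly smaller than $\tfrac12$ once $m|D|$ is large. First I would note that $Y = Cm^{-A}|D|^{-B}$ with $A>3$, $B>1$ certainly satisfies $0<Y\ll \tfrac1m$ for $m|D|$ large, so Theorem~\ref{thm:trace-R(Y)} applies and gives
\begin{equation}
	\Tr_d j_m(z_D) = -24\delta_d\sigma_1(m)h(D) + \sum_{z_Q\in\mathcal R(Y)}\chi_d(Q)\bigl(e(-mz_Q)-e(-m\bar z_Q)\bigr) + E,
\end{equation}
where
\begin{equation}
	E \ll m|D|^{\frac13}Y^{\frac12}\Bigl(Y^{-\frac16} + |D|^{\frac5{36}}m^{\frac13(1+\theta)}\Bigr)(m|D|/Y)^\ep.
\end{equation}
The left-hand side and the first term on the right are integers (the first because $\Tr_d j_m(z_D)$ is an integral linear combination of algebraic traces of singular moduli, which are rational integers, and the second manifestly so), hence it suffices to show $|E|<\tfrac12$.

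For the main estimate I would substitute $Y = Cm^{-A}|D|^{-B}$ and track the exponents of $m$ and $|D|$ in each of the two pieces of $E$. In the first piece, $mY^{1/2}Y^{-1/6}|D|^{1/3} = m\,C^{1/3}m^{-A/3}|D|^{-B/3}|D|^{1/3} = C^{1/3}m^{1-A/3}|D|^{(1-B)/3}$; since $A>3$ we have $1-A/3<0$, and since $B>1$ we have $(1-B)/3<0$, so this term is $O(C^{1/3}(m|D|)^{-\eta})$ for some fixed $\eta>0$ depending on $A,B$. In the second piece, $mY^{1/2}|D|^{1/3+5/36}m^{1/3(1+\theta)} = C^{1/2}m^{1-A/2+(1+\theta)/3}|D|^{-B/2+1/3+5/36}$; using $A>3$ and $\theta\le\tfrac7{64}$ one checks $1-A/2+(1+\theta)/3 < 1-\tfrac32+\tfrac13\cdot\tfrac{71}{64}<0$, and using $B>1$ one checks $-B/2+\tfrac13+\tfrac5{36} = -B/2+\tfrac{17}{36}<-\tfrac12+\tfrac{17}{36}<0$, so this term is also $O(C^{1/2}(m|D|)^{-\eta'})$ for some fixed $\eta'>0$. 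Absorbing the harmless factor $(m|D|/Y)^\ep = (C^{-1}m^{1+A}|D|^{1+B})^\ep$ into a slightly smaller power saving (choosing $\ep$ small relative to $\min(\eta,\eta')$), I conclude $|E| \ll_{A,B} C^{1/2}(m|D|)^{-\eta''}$ with $\eta''>0$. Hence there is a threshold, depending only on $C$ (and on the fixed constants $A,B,\theta$), beyond which $|E|<\tfrac12$, which is exactly the assertion of the corollary.

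There is really no serious obstacle here: the only thing to be careful about is bookkeeping the exponents so that the power of $m$ and the power of $|D|$ are \emph{separately} negative (this is what forces the hypotheses $A>3$ and $B>1$, matching the $\frac13$ exponent on $|D|$ and — after combining $\frac13+\frac5{36}$ — the need for $B/2>\frac{17}{36}$, and similarly $A/2$ must beat $1+\frac13(1+\theta)\le\frac{97}{64}\cdot\frac13+1<\frac32$). One should also remark that the dependence of the threshold on $C$ alone (and not on $m$ or $|D|$ individually) is genuine, since the $\ep$-power of $Y^{-1}$ contributes only powers of $m|D|$ times a bounded power of $C^{-1}$, all of which are dominated by the fixed negative powers coming from the main terms.
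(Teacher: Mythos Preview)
Your proof is correct and follows exactly the approach the paper intends: the paper's own justification is the single sentence ``Choosing $Y$ small enough that the error term tends to zero, we obtain the following,'' and your argument simply supplies the explicit exponent bookkeeping showing that with $Y=Cm^{-A}|D|^{-B}$, $A>3$, $B>1$, the error in Theorem~\ref{thm:trace-R(Y)} is $\ll_{A,B} C^{1/2}(m|D|)^{-\eta}$ for some $\eta>0$. One harmless slip: in your closing parenthetical you write $\tfrac{97}{64}$ where you mean $\tfrac{71}{64}=1+\tfrac{7}{64}$ (with $\tfrac{97}{64}$ the displayed inequality would actually fail), but your main computation a few lines above uses the correct value.
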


We prove Theorem~\ref{thm:trace-R(Y)} by obtaining a hybrid estimate for sums of half-integral weight Kloosterman sums associated with the Kohnen plus space on $\Gamma_0(4)$.
Much of the heavy lifting that leads to this estimate was done in the authors' recent work \cite{andersen-duke-kloosterman} studying real quadratic analogues of traces of singular moduli.
In that paper, we obtained asymptotic formulas for averages of two types of real quadratic geometric invariants: integrals of $j_m$ over modular surfaces and contour integrals of $j_m$ along the boundaries of the surfaces.
Essential here and in \cite{andersen-duke-kloosterman} is a variant of Kuznetsov's formula relating Kloosterman sums to products $\rho_j(d)\rho_j(d')$ of coefficients of Maass forms in Kohnen's plus space, where $D=dd'$.
The main significant difference is that here $d$ and $d'$ have opposite sign, while in the real quadratic case $d$ and $d'$ have the same sign.

\section{From traces to quadratic Weyl sums} \label{sec:weyl}

We begin by relating the twisted traces of singular moduli to the quadratic Weyl sums
\begin{equation} \label{eq:Tm-def}
	T_m(d,d';c) = \sum_{\substack{b\bmod c \\ b^2\equiv D(c)}} \chi_d\left(\left[\tfrac c4,b,\tfrac{b^2-D}{c}\right]\right) e\pmfrac{2mb}{c},
\end{equation}
where, as in the introduction, $D=dd'$.
In \cite[(4.10)--(4.11)]{DIT-cycle} the function $j_m(z)$ is expressed in terms of (the analytic continuation of) a Poincar\'e series $G_{-m}(z,s)$ evaluated at $s=1$:
\begin{equation}
	j_m(z) = G_{-m}(z,1) - 24\sigma_1(m).
\end{equation}
Thus by Proposition~4 of \cite{DIT-cycle} we have\footnote{In \cite{DIT-cycle} the Weyl sums are denoted $S_m(d,d';c)$ but we have used the notation $T_m(d,d';c)$ to avoid confusion with the Kloosterman sums in the next section.}
\begin{equation} \label{eq:limit-Tm}
	\Tr_d j_m(z_D) = -24\delta_{d}\sigma_1(m)h(D) + \pi(2m)^{\frac12} |D|^{\frac14} \lim_{s\to 1^+} \sum_{4\mid c}  \mfrac{T_m(d,d';c)}{\sqrt c} I_{s-\frac12} \left(\mfrac{4\pi m}c |D|^{\frac 12}\right),
\end{equation}
where $I_\nu$ is the $I$-Bessel function.
In Section~\ref{sec:kuznetsov} we will prove the following estimate for averages of the Weyl sums $T_m(d,d';c)$.

\begin{theorem} \label{thm:Tm-sums}
Suppose that $D=dd'$ is a negative discriminant and that $d$ is a fundamental discriminant.
Then for any $m\geq 1$ we have
\begin{equation}
	\sum_{4\mid c\leq x} \frac{T_m(d,d';c)}{\sqrt c} \ll \left(x^{\frac 16} + |D|^{\frac 29}m^{\frac 13(1+\theta)}\right)(m|D|x)^\ep.
\end{equation}
\end{theorem}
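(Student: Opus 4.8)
The plan is to treat Theorem~\ref{thm:Tm-sums} as a ``sum of Kloosterman sums'' estimate in the spirit of Kuznetsov and Deshouillers--Iwaniec, but for the half-integral weight Kloosterman sums $K^+$ attached to the Kohnen plus space on $\Gamma_0(4)$. The starting point is an arithmetic identity: the congruence $b^2\equiv D\pmod{c}$ twisted by the genus character $\chi_d$ is of Sali\'e type, so that $T_m(d,d';c)$ coincides, up to elementary factors, with a plus-space Kloosterman sum $K^+(\mu,\nu;c)$ whose indices satisfy $\mu\nu=m^2D$; since $d$ is fundamental and $D=dd'<0$ the two indices have opposite sign. When $d'$ is not fundamental its square part, and likewise the factor $m^2$, are peeled off by short divisor sums coming from the Hecke action on the plus space. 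This reduces the theorem to bounding $\sum_{4\mid c\le x}K^+(\mu,\nu;c)/\sqrt{c}$ with $\mu\nu<0$, uniformly in $\mu$ and $\nu$.

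I would then split the sum over $c$ at a point of size $\asymp x^{1/6}$. For $c\le x^{1/6}$ the Weil-type bound $|K^+(\mu,\nu;c)|\ll c^{1/2+\ep}$ contributes $\ll x^{1/6+\ep}$ directly. For $x^{1/6}\le c\le x$ I would apply the half-integral weight Kuznetsov formula — in the form developed in \cite{andersen-duke-kloosterman}, adapted to indices of opposite sign — with a test function engineered so that its Kloosterman side reproduces the truncated sum. On the spectral side this yields $\sum_j\rho_j(d)\overline{\rho_j(d')}\,h(t_j)$, where the $\rho_j$ are the Fourier coefficients of plus-space Maass cusp forms and $h$ is the Bessel transform of the test function, plus a term from the continuous spectrum built out of quadratic Dirichlet $L$-functions. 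A convenient feature is that plus-space Maass cusp forms correspond, via the Shimura/Katok--Sarnak lift, to weight $0$ Maass cusp forms on $\SL_2(\Z)$, so that no exceptional Laplace eigenvalues occur; hence the admissible exponent $\theta$ is needed not for small eigenvalues but for the Ramanujan bound $|\lambda_{f_j}(n)|\ll_\ep n^{\theta+\ep}$ on the Hecke eigenvalues of the lift, which enters precisely when the square part of $d'$ and the factor $m^2$ are reinstated.

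The analytic heart is the estimation of the spectral sum. I would decompose it dyadically in the spectral parameter $t_j$, bound $\sum|\rho_j(d)\rho_j(d')|$ over each range by Cauchy--Schwarz together with the spectral large sieve for plus-space coefficients (of the shape $\sum_{|t_j|\le T}\cosh(\pi t_j)^{-1}|\rho_j(n)|^2\ll(T^2+\sqrt{|n|})(|n|T)^\ep$), control the Bessel transform $h(t_j)$ on each range, and treat the continuous spectrum the same way using the convexity bound for Dirichlet $L$-functions. The bulk of the spectrum contributes the $x^{1/6}$, while the low-frequency part — together with the reinstatement of the non-fundamental part of $d'$ and the factor $m^2$ — produces the term $|D|^{2/9}m^{\frac13(1+\theta)}$: here one combines the Ramanujan bound for $\lambda_{f_j}$ with a subconvex (and, in the convexity range, merely convex) bound for $L(\tfrac12,f_j\times\chi_d)$, inserted for $|\rho_j(d)|^2$ via the Waldspurger/Katok--Sarnak formula. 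The two steps I expect to be the main obstacles are (i) engineering the test function and controlling the Bessel transform $h$ near its transition range and in its tails, so that the Kuznetsov portion also stays $\ll x^{1/6}$ while every estimate remains uniform in $m$ and $|D|$; and (ii) the arithmetic input needed to reinstate the non-fundamental part of $d'$ and the factor $m^2$, where one must propagate $\theta$ through the Hecke relations and invoke the twisted subconvex bound — this interplay being the source of the somewhat lossy exponents $\frac29$ and $\frac13(1+\theta)$.
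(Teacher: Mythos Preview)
Your plan is essentially the paper's: reduce $T_m$ to plus-space Kloosterman sums $S_k^+$ via Kohnen's identity \eqref{eq:kohnen-identity}, apply the opposite-sign plus-space Kuznetsov formula built from \cite{andersen-duke-kloosterman}, combine the mean-value estimate for Fourier coefficients with Young's Weyl-type subconvexity for $L(\tfrac12,f_j\times\chi_d)$ (inserted via Waldspurger/Katok--Sarnak) to bound the cuspidal spectrum, treat short $c$ by the Weil bound, and balance. Two points, however, would prevent you from reaching the stated exponent $|D|^{2/9}$.

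First, the split in $c$ must be at a point depending on $|D|$ and $m$ rather than on $x$ --- roughly $|D|^{4/9}m^{2(1+\theta)/3}$ --- so that the Weil contribution $c_0^{1/2+\ep}$ balances the term of shape $|D|^{2/3}m^{1+\theta}c_0^{-1}$ produced by the spectral range $r_j\asymp\sqrt{|mn|}/c$; splitting at $x^{1/6}$ leaves that term uncontrolled when $x$ is small relative to $|D|$ and $m$. Second, and this is the missing analytic input, the \emph{convexity} bound for Dirichlet $L$-functions is not enough on the continuous spectrum: for bounded $r$ it gives only $|L(\tfrac12+2ir,\chi_d)|\ll|d|^{1/4}$, so the Eisenstein integral contributes $|dd'|^{1/4}$ to every dyadic block and the final $|D|$-exponent degrades to $\tfrac14>\tfrac29$. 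The paper instead applies Young's sixth-moment bound $\int_T^{T+1}|L(\tfrac12+2ir,\chi_d)|^6\,dr\ll(|d|(1+T))^{1+\ep}$ together with H\"older (exponents $\tfrac16,\tfrac16,\tfrac23$) to push the Eisenstein contribution down to $|dd'|^{1/6}+|dd'|^{2/3}(vw)x^{-1}$, which is then dominated by the cuspidal terms. In short, the same subconvex input you already invoke on the cuspidal side must also be fed into the continuous spectrum.
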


Theorem~\ref{thm:Tm-sums} and \cite[(10.29.1) and (10.30.1)]{nist} together justify exchanging the sum and the limit in \eqref{eq:limit-Tm}, so we conclude that
\begin{equation} \label{eq:trace-sinh}
	\Tr_d j_m(z_D) = -24\delta_{d}\sigma_1(m)h(D) + \sum_{4\mid c} T_m(d,d';c) \sinh\left(\mfrac{4\pi m}c |D|^{\frac 12}\right).
\end{equation}
This formula generalizes \cite[(12)]{duke-24}.
Suppose that $x \gg m|D|^{1/2}$.
For the tail of the $c$-sum in \eqref{eq:trace-sinh} we have, by partial summation,
\begin{align} \label{eq:Tm-tail}
	\sum_{4\mid c\geq x} T_m(d,d';c) \sinh\left(\mfrac{4\pi m}c |D|^{\frac 12}\right) \ll m|D|^{\frac 12} x^{-\frac 12}\left( x^{\frac 16} + |D|^{\frac 29} m^{\frac 13(1+\theta)} \right)(m|D|x)^\ep.
\end{align}
Setting $x= \frac{2}{Y}|D|^{1/2}$, Theorem~\ref{thm:trace-R(Y)} will follow after we relate the terms $c< x$ in \eqref{eq:trace-sinh} to the CM points $z_Q\in \mathcal R(Y)$.

\begin{lemma} \label{lem:R(Y)}
 For any $Y>0$ and $\xi\in \{-1,1\}$ we have
\begin{equation} \label{eq:c-leq-x-zq-sum}
	\mfrac 12\sum_{4\mid c < \frac 2Y|D|^{1/2}} T_m(d,d';c) \exp\left(\xi\mfrac{4\pi m}{c}|D|^{\frac 12}\right) = \sum_{z_Q\in \mathcal R(Y)} \chi_d(Q) e(-mz_{Q,\xi}),
\end{equation}
where $z_{Q,\xi}=z_Q$ when $\xi=1$ and $z_{Q,\xi}=\bar z_Q$ when $\xi=-1$.
\end{lemma}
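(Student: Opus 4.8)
The plan is to expand both sides of \eqref{eq:c-leq-x-zq-sum} by parametrizing the CM points $z_Q\in\mathcal R(Y)$ through the binary quadratic forms of discriminant $D$ and to match them termwise against the contributions to the Weyl sums; the constant $\tfrac12$ on the left will be absorbed by the fact that the square roots of $D$ modulo $c=4a$ are exactly twice as numerous as those modulo $2a$.

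First I would set $c=4a$ with $a\ge1$ on the left-hand side. The restriction $4\mid c<\tfrac2Y|D|^{1/2}$ turns into $1\le a<\tfrac1{2Y}|D|^{1/2}$; the congruence $b^2\equiv D\pmod c$ becomes $b^2\equiv D\pmod{4a}$; the form appearing inside $\chi_d$ becomes $\big[a,b,\tfrac{b^2-D}{4a}\big]$, which has discriminant $D$ and is positive definite; the phase $e\big(\tfrac{2mb}{c}\big)$ becomes $e\big(\tfrac{mb}{2a}\big)$; and the exponential factor becomes $\exp\big(\xi\tfrac{\pi m}{a}|D|^{1/2}\big)$. The key observation is that the summand $\chi_d\big(\big[a,b,\tfrac{b^2-D}{4a}\big]\big)\,e\big(\tfrac{mb}{2a}\big)$ depends on $b$ only modulo $2a$: replacing $b$ by $b+2a$ fixes $e\big(\tfrac{mb}{2a}\big)$ (since $m\in\Z$) and carries $\big[a,b,\tfrac{b^2-D}{4a}\big]$ to the $\SL_2(\Z)$-equivalent form $\big[a,b+2a,\tfrac{(b+2a)^2-D}{4a}\big]$, on which the genus character $\chi_d$ takes the same value because it is a class invariant.

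Next I would reduce modulo $2a$. Since $(b+2a)^2\equiv b^2\pmod{4a}$, the condition $b^2\equiv D\pmod{4a}$ depends only on $b\bmod 2a$, and the reduction map $\Z/4a\Z\to\Z/2a\Z$ restricts to a $2$-to-$1$ surjection of $\{b\bmod 4a:b^2\equiv D\,(4a)\}$ onto $\{b\bmod 2a:b^2\equiv D\,(4a)\}$ (the two lifts $b$ and $b+2a$ of a solution are distinct mod $4a$ and both solve the congruence). By the invariance just noted, the inner sum over $b\bmod 4a$ equals twice the corresponding sum over $b\bmod 2a$, which cancels the $\tfrac12$. Taking for each class mod $2a$ the representative with $-a<b\le a$ and setting $Q=\big[a,b,\tfrac{b^2-D}{4a}\big]$, we have $z_Q=\tfrac{-b+\sqrt D}{2a}$ with $\re(z_Q)=\tfrac{-b}{2a}\in[-\tfrac12,\tfrac12)$ and $\im(z_Q)=\tfrac{|D|^{1/2}}{2a}$, which exceeds $Y$ precisely when $a<\tfrac1{2Y}|D|^{1/2}$; thus the pairs $(a,b)$ run bijectively over the forms of discriminant $D$ with $z_Q\in\mathcal R(Y)$. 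It then remains to verify that $e(-mz_{Q,\xi})=e\big(\tfrac{mb}{2a}\big)\exp\big(\xi\tfrac{\pi m}{a}|D|^{1/2}\big)$, which is immediate from $z_{Q,1}=z_Q$, $z_{Q,-1}=\bar z_Q$ and $\sqrt D=i|D|^{1/2}$; this matches the surviving summand and completes the identification.

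The argument is essentially bookkeeping, so I do not anticipate a genuine obstacle; the one point that must be handled with care is the exact $2$-to-$1$ correspondence between the square roots of $D$ modulo $4a$ and modulo $2a$, uniformly in $a$, since this is what produces the constant $\tfrac12$, together with the observation that $\chi_d$, being a class function on forms, is unaffected by the substitution $b\mapsto b+2a$.
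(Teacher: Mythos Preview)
Your proposal is correct and follows essentially the same route as the paper's proof: after the substitution $c=4a$ (the paper writes $c\mapsto 4c$), both arguments exploit the invariance of the summand under $b\mapsto b+2a$ to halve the range of $b$ from $4a$ to $2a$, cancelling the $\tfrac12$, and then identify the pairs $(a,b)$ with $-a<b\le a$ and $b^2\equiv D\pmod{4a}$ bijectively with the CM points $z_Q\in\mathcal R(Y)$. Your treatment is slightly more explicit about why the reduction $b\bmod 4a\mapsto b\bmod 2a$ is exactly $2$-to-$1$, but the substance is the same.
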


\begin{proof}
Let $x=\frac2Y|D|^{1/2}$.
	Replacing $c$ by $4c$ and using the definition \eqref{eq:Tm-def} we have
	\begin{equation} \label{eq:Tm-partial-sum-1}
		\mfrac 12\sum_{c < \frac x4 } T_m(d,d';4c) \exp\left(\xi\mfrac{\pi m}{c}|D|^{\frac 12}\right) 
		= \sum_{c < \frac x4} \sum_{\substack{b\bmod 2c \\ b^2\equiv D(4c)}} \chi_d \left(\left[c,b,\mfrac{b^2-D}{4c}\right]\right) e\left(\mfrac{-m(-b+i\xi\sqrt{|D|})}{2c}\right),
	\end{equation}
	where we have used that the $b$-summands are unchanged by replacing $b$ by $b+2c$.
	The quadratic forms $Q$ of discriminant $D$ with $-\frac 12 \leq \re(z_Q) < \frac 12$ are in one-to-one correspondence with pairs of integers $(b,c)$ for which $-c < b \leq c$ and $\frac{b^2-D}{4c}\in \Z$.
	Furthermore, $\im(z_Q) = \frac 1{2c}|D|^{1/2}$, so the condition $c < \frac x4$ is equivalent to $\im(z_Q) > \frac{2}{x}|D|^{1/2}$.
	Thus the left-hand side of \eqref{eq:c-leq-x-zq-sum} equals
	\begin{equation}
		\sum_{z_Q\in \mathcal R(\frac 2x |D|^{1/2})} \chi_d(Q) e(-mz_{Q,\xi}),
	\end{equation}
	from which the lemma follows after replacing $x$ by $\frac 2{Y}|D|^{1/2}$.
\end{proof}

\begin{proof}[Proof of Theorem~\ref{thm:trace-R(Y)}]
	By \eqref{eq:trace-sinh}, Lemma~\ref{lem:R(Y)}, and \eqref{eq:Tm-tail} with $x = \frac{2}{Y}|D|^{1/2}$, we find that
	\begin{multline}
		\Tr_d j_m(z_D)
		= -24\delta_d\sigma_1(m)h(D)
		+ \sum_{z_Q\in \mathcal R(Y)} \chi_d(Q) \left(e(-mz_Q)-e(-m\bar{z}_Q)\right) \\ + O\left(m|D|^{\frac 13}Y^{\frac 12} \left( Y^{-\frac 16} + |D|^{\frac 5{36}}m^{\frac 13(1+\theta)} \right)(m|D|/Y)^\ep\right),
	\end{multline}
	as desired.
\end{proof}

\begin{proof}[Proof of Theorem~\ref{thm:twisted-m-1}]
	Setting $Y=1/m$ in Theorem~\ref{thm:trace-R(Y)}, we obtain \eqref{eq:twisted-m-1} with $e(-mz_Q)$ replaced by $e(-mz_Q)-e(-m\bar z_Q)$.
	By Lemma~\ref{lem:R(Y)} it remains to estimate the sum
	\begin{equation} \label{eq:Tm-neg-sum}
		\sum_{c\leq 2m|D|^{1/2}} T_m(d,d';c)\exp\left(-\mfrac{4\pi m}{c}|D|^{\frac 12}\right).
	\end{equation}
	A straightforward argument involving Theorem~\ref{thm:Tm-sums} and partial summation shows that the sum in \eqref{eq:Tm-neg-sum} is $\ll |D|^{\frac {17}{36}+\ep}m^{\frac 56+\frac\theta3+\ep}$.
\end{proof}

\section{Plus-space Kuznetsov trace formula with opposite signs} \label{sec:kuznetsov}

The estimate in Theorem~\ref{thm:Tm-sums} will follow from an estimate for sums of Kloosterman sums of half-integral weight associated to Kohnen's plus space on $\Gamma_0(4)$.
For additional background see Section~3 of \cite{andersen-duke-kloosterman}.
Let $k=\pm \frac 12 = \lambda+\frac 12$ and suppose that $(-1)^\lambda m, (-1)^\lambda n \equiv 0,1\pmod{4}$.
The plus-space Kloosterman sums are
\begin{equation}
	S_k^+(m,n;c) = e\left(-\mfrac k4\right) \sum_{d\bmod c} \pfrac cd \ep_d^{2k} e\pfrac{m\bar d+nd}{c} \times 
	\begin{cases}
		1 & \text{ if }8\mid c, \\
		2 & \text{ if }4\mid\mid c,
	\end{cases}
\end{equation}
where $d\bar d\equiv 1\pmod{c}$ and $\ep_d=1$ or $i$ according to $d\equiv 1$ or $3 \pmod 4$, respectively.
These are related to the quadratic Weyl sums via Kohnen's identity (Lemma~8 of \cite{kohnen-newforms})
\begin{equation} \label{eq:kohnen-identity}
	T_m(d,d';c) = \sum_{n\mid (m,\frac c4)} \pmfrac dc \sqrt{\mfrac{2n}{c}} \, S_{\frac 12}^+ \left(d',\mfrac{m^2}{n^2}d;\mfrac{c}{n}\right).
\end{equation}
Furthermore, we have the relation
\begin{equation} \label{eq:Sk-minus}
	S_k^+(m,n;c) = S_k^+(n,m;c) = S_{-k}^+(-m,-n;c).
\end{equation}
Theorem~\ref{thm:Tm-sums} follows in a straightforward way from \eqref{eq:kohnen-identity}, \eqref{eq:Sk-minus}, and the following theorem.

\begin{theorem}\label{thm:Sk-sums}
Let $k,\lambda,m,n$ be as above, with the additional assumption that $m>0$, $n<0$, and $(-1)^\lambda m=dv^2$, $(-1)^\lambda n=d'w^2$, with $d,d'$ fundamental discriminants.
Then
\begin{equation}
	\sum_{4\mid c\leq x} \frac{S_k^+(m,n;c)}{c} \ll \left( x^{\frac 16} + |dd'|^{\frac 29}(vw)^{\frac 13(1+\theta)} \right)(|mn|x)^\ep.
\end{equation}
\end{theorem}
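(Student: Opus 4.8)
The plan is to open the Kloosterman sums with an opposite-sign analogue of the plus-space Kuznetsov trace formula on $\Gamma_0(4)$ and then estimate the resulting spectral sums along the lines of \cite{andersen-duke-kloosterman}. First I would fix $g\in C_c^\infty(0,\infty)$ supported in $[\tfrac12,2]$ with $\sum_X g(c/X)\equiv 1$ on $c\ge 1$, so that it suffices to bound each dyadic piece $\sum_{4\mid c}\tfrac1c S_k^+(m,n;c)\,g(c/X)$ with $1\ll X\ll x$, the sharp cutoff near $c\asymp x$ being recovered in the usual way at the cost of a factor $(|mn|x)^\ep$. Since $m>0$ and $n<0$, the transform pairing the $c$-side with the spectral side is built from $K$-Bessel functions $K_{2it}\!\big(\tfrac{4\pi\sqrt{|mn|}}c\big)$ in place of the $J$-Bessel functions of the same-sign case treated in \cite{andersen-duke-kloosterman}; I would choose a spectral test function $h=h_X$ whose $K$-Bessel transform reproduces the weight $\tfrac1c g(c/X)$ up to negligible error, so that $h_X$ is essentially supported on $|t|\ll T$ for a suitable parameter $T$ and has controlled size there, reducing the problem to estimating the spectral sides of the trace formula against $h_X$.

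Applying the trace formula, the dyadic piece becomes a diagonal term (which vanishes since $m\ne n$), a cuspidal sum $\sum_j h_X(t_j)\,\overline{\rho_j(m)}\,\rho_j(n)$ over Maass forms in the plus space, an Eisenstein integral, and a residual contribution. For the cuspidal sum I would apply Cauchy--Schwarz to separate $m$ from $n$, reducing matters to the spectral large sieve $\sum_{|t_j|\le T}|\rho_j(m)|^2\ll(T^2+|m|^{1/2})|m|^\ep$ for the plus space and its analogue for $n$; combining these and running the classical Kuznetsov optimization over $T$ already produces the term $x^{1/6}$. To obtain the sharper dependence on $|dd'|$ one instead invokes the Katok--Sarnak/Kohnen--Zagier identity writing $|\rho_j(dv^2)|^2$ as a constant times $L(\tfrac12,u_j\otimes\chi_d)$ times a Hecke divisor sum over $v$ (controlled by $v^\theta\sigma_0(v)$ via the admissible exponent $\theta$), where $u_j$ is the Shimura lift, and applies a subconvex first-moment bound for $\sum_{|t_j|\le T}L(\tfrac12,u_j\otimes\chi_d)$. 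Assembling the per-scale bounds over the dyadic scales $X\le x$ then yields $\big(x^{1/6}+|dd'|^{2/9}(vw)^{\frac13(1+\theta)}\big)(|mn|x)^\ep$; weakening the first-moment bound to the trivial large sieve, resp. strengthening it under Lindel\"of, changes the exponent $\tfrac29$ exactly as recorded in the Remark following Theorem~\ref{thm:trace-R(Y)}.

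The Eisenstein contribution is handled in parallel: the plus-space Eisenstein coefficients are essentially $L(\tfrac12+it,\chi_d)/\zeta(1+2it)$ times a divisor sum in $v$, so after Cauchy--Schwarz and $|\zeta(1+2it)|^{-1}\ll\log(2+|t|)$ it reduces to a (sub)convex bound for $L(\tfrac12,\chi_d)$ weighted by $h_X$, which stays within the claimed error, while the residual term is of lower order. (Theorem~\ref{thm:Tm-sums} then follows from Theorem~\ref{thm:Sk-sums} by inserting it into Kohnen's identity \eqref{eq:kohnen-identity} together with \eqref{eq:Sk-minus} and estimating the short divisor sum over $n\mid(m,\tfrac c4)$ trivially.)

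The main obstacle is the $K$-Bessel transform analysis underlying the opposite-sign Kuznetsov step: one must show that $h_X$ can be chosen to reproduce the dyadic cutoff in $c$ while remaining sharply localized in $t$, with uniform control of its size and of the spectral truncation error. Unlike for $J$-Bessel functions there is no oscillation to exploit, so one relies instead on the exponential decay of $K_{2it}(\xi)$ for $\xi\gg 1+|t|$ and a careful treatment of the transition range $\xi\asymp|t|$, all uniformly in $m$ and $n$. Once this transform estimate and the large-sieve and subconvexity inputs are in place, assembling the pieces over the dyadic scales — so that the boundary contribution produces $x^{1/6}$ and the bulk contribution produces $|dd'|^{2/9}(vw)^{\frac13(1+\theta)}$ — and passing back from the smoothed to the sharp sum is routine.
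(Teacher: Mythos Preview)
Your overall strategy---apply an opposite-sign plus-space Kuznetsov formula, bound the cuspidal piece via large sieve together with Katok--Sarnak/Kohnen--Zagier and Young's subconvexity, and handle the Eisenstein piece by Dirichlet $L$-function moments---is exactly the paper's. Two points of execution differ, and the second is a genuine gap.

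First, the paper does not try to invert the transform. Following Sarnak--Tsimerman it picks the test function $\varphi=\varphi_{a,x,T}$ directly on the Kloosterman side (a smoothed indicator of $c\asymp x$ with an extra unsmoothing parameter $T\le x/3$) and then estimates the \emph{forward} $K$-Bessel transform $\check\varphi(r)$ in four explicit ranges (Theorem~\ref{thm:phicheck}). Your plan to manufacture a spectral $h_X$ whose inverse transform reproduces $g(c/X)/c$ is the harder direction and is not needed; the ``main obstacle'' you flag largely evaporates once you work forwards. In the cuspidal range $r_j\gg a/x$ the paper also does not use the two inputs separately as you describe: it takes the large-sieve bound $A^2+|mn|^{1/4}A$ and the subconvex bound $|dd'|^{1/6}(vw)^\theta A^2$ and combines them via $\min(y,z)\le\sqrt{yz}$, which is what produces the intermediate exponent $\tfrac{5}{24}$ before the final step.

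Second, your claim that ``assembling the per-scale bounds over the dyadic scales $X\le x$ then yields $x^{1/6}+|dd'|^{2/9}(vw)^{\frac13(1+\theta)}$'' is not correct as stated. After optimizing $T=x^{2/3}$ the paper's dyadic bound is
\[
\sum_{x\le c<2x}\frac{S_k^+(m,n;c)}{c}\ \ll\ \Big(x^{\frac16}+|dd'|^{\frac23}(vw)^{1+\theta}x^{-1}+|dd'|^{\frac{5}{24}}(vw)^{\frac14+\frac{\theta}{2}}\Big)(|mn|x)^\ep,
\]
and the middle term blows up as $x\to 0$, so summing over all dyadic $X\le x$ fails. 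The paper instead handles the initial segment $c\le |dd'|^{4/9}(vw)^{2(1+\theta)/3}$ by the Weil bound \eqref{eq:weil-bound} and applies the dyadic estimate only beyond that threshold; it is this balancing that converts the exponents $\tfrac23$ and $\tfrac{5}{24}$ into the stated $\tfrac29$. Without this Weil-balancing step your argument does not close.
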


Individually, the Kloosterman sums satisfy the Weil bound
\begin{equation} \label{eq:weil-bound}
	|S_k^+(m,n,c)| \leq 2 \sigma_0(c) \gcd(m,n,c)^{\frac 12} \sqrt{c},
\end{equation}
see Lemma~6.1 of \cite{dfi-weyl}.
Thus the sum in Theorem~\ref{thm:Sk-sums} is trivially bounded above by $x^{1/2}|mnx|^\ep$.

To prove Theorem~\ref{thm:Sk-sums} we will use a version of Kuznetsov's formula relating the Kloosterman sums $S_k^+(m,n;c)$ to Fourier coefficients of Maass cusp forms residing in the Kohnen plus-space.
Let $\vartheta(z) = \sum_{n\in \Z}e(n^2z)$ denote the usual Jacobi theta function, and let $\nu_\vartheta$ denote the associated multiplier system.
Let $\Gamma=\Gamma_0(4)$ and $(k,\nu) = (\frac 12, \nu_{\vartheta})$ or $(-\frac 12, \bar\nu_{\vartheta})$.
The plus-space $\mathcal V_k^+$ of Maass cusp forms of weight $k$ for $\Gamma$ is spanned by functions $u:\H\to\C$ satisfying
\begin{itemize}
	\item $(\Delta_k  + \frac 14+r^2)u = 0$, where $\Delta_k = y^2(\partial_x^2 + \partial_y^2) - iky \partial_x$ is the hyperbolic Laplacian,
	\item $u(\gamma z) = \nu(\gamma)j(\gamma,z)^k u(z)$ for all $\gamma\in \Gamma$, where $j(\pmatrix abcd,z)=\frac{cz+d}{|cz+d|}$,
	\item the Fourier coefficients of $u(z)$ are supported on exponents $\equiv 0,(-1)^\lambda\pmod{4}$,
	\item $\lVert u \rVert <\infty$, where $\lVert \cdot \rVert$ is the Petersson norm.
\end{itemize}
The quantity $r$ is called the spectral parameter.

Once and for all we fix a spectrally normalized ($\lVert u \rVert = 1$) orthonormal basis $\{u_j\}_{j=0}^\infty$ of $\mathcal V_k^+$ consisting of eigenforms for the Hecke operators, ordered by eigenvalue $\frac 14+r_j^2$.
The spectral parameter $r_0=\frac i4$ corresponds to $u_0=y^{1/4}\vartheta(z)$ or its conjugate.
In either case, the product of the $m$-th and $n$-th Fourier coefficients of $u_0$ vanishes whenever $mn<0$.
Thus $u_0$ does not contribute to the Kuznetsov formula.
For $j\geq 1$, Theorem~1.2 of \cite{baruch-mao} shows that there is a unique normalized Maass cusp form $v_j$ of weight~$0$ with spectral parameter $2r_j$ which is even if $k=\frac 12$ and odd if $k=-\frac 12$, and such that the Hecke eigenvalues of $u_j$ and $v_j$ agree.
Since each spectral parameter in weight~$0$ on $\SL_2(\Z)$ is positive we find that $r_1 > 0$.
For $j\geq 1$ we normalize the Fourier coefficients $\rho_j$ of $u_j$ by
\begin{equation}
	u_j(z) = \sum_{n\neq 0} \rho_j(n) W_{\frac k2\sgn(n),ir_j}(4\pi |n|y) e(nx),
\end{equation}
where $W_{\mu,\nu}$ is the $W$-Whittaker function.

Kuznetsov proved his formula in \cite{kuznetsov} in the setting of integral weight on $\SL_2(\Z)$.
Later Proskurin \cite{proskurin-new} generalized Kuznetsov's formula to arbitrary weight and for Fuchsian groups of the first kind, but only when both inputs $m,n$ are positive.
Blomer \cite[Proposition~2]{blomer} proved a version of the Kuznetsov formula for $\nu\in \{\nu_\vartheta,\bar\nu_\vartheta\}$ with $mn<0$. 
Using Blomer's result, and following the exact same procedure\footnote{The most difficult part of the argument in \cite{andersen-duke-kloosterman} is the proof of Proposition~5.6, but that result already holds for all $m,n\equiv 0,1\pmod{4}$ with no sign restrictions.} as in Section~5 of \cite{andersen-duke-kloosterman}, we obtain the plus-space version for opposite signs, Theorem~\ref{thm:KTFplus-space+} below.
We first fix some notation.
Given a smooth test function $\varphi:[0,\infty)\to \R$ satisfying
\begin{gather} \label{eq:varphi-cond}
	\varphi(0) = \varphi'(0) = 0 \quad \text{ and } \quad
	\varphi^{(j)}(x) \ll x^{-2-\varepsilon} \quad \text{ for }j=0,1,2,3,
\end{gather}
define the integral transform
\begin{align}
	\check \varphi(r) &= 2\cosh(\pi r) \int_0^\infty K_{2ir}(x) \varphi(x) \mfrac{dx}{x},
\end{align}
where $K_{2ir}$ is the $K$-Bessel function.
If $d$ is a fundamental discriminant, let $\chi_d = \pfrac d\cdot$ and let $L(s,\chi_d)$ denote the analytic continuation of the Dirichlet $L$-function
\begin{equation}
 	L(s,\chi_d) = \sum_{n=1}^\infty \frac{\chi_d(n)}{n^s}.
\end{equation} 
Finally, we define
\begin{equation}
	\fS_d(w,s) = \sum_{\ell\mid w} \mu(\ell)  \frac{\chi_{d}(\ell)}{\sqrt \ell} \sum_{ab\ell=w} \pfrac ab^s.
\end{equation}

\begin{theorem} \label{thm:KTFplus-space+}
Let $\varphi:[0,\infty)\to\R$ be a smooth test function satisfying \eqref{eq:varphi-cond}.
Let $k=\pm \frac 12=\lambda+\frac 12$.
Suppose that $m>0$, $n<0$ with $(-1)^\lambda m,(-1)^\lambda n\equiv 0,1\pmod{4}$, and write
\begin{equation}
	(-1)^\lambda m = v^2 d', \quad (-1)^\lambda n = w^2 d, \quad \text{ with }d,d'\text{ fundamental discriminants.}
\end{equation}
Fix an orthonormal basis of Maass cusp forms $\{u_j\}\subset \mathcal V_k^+$ with associated spectral parameters $r_j$ and coefficients $\rho_j(n)$.
Then
\begin{multline}
	\sum_{4\mid c>0} \frac{S_k^+(m,n,c)}{c} \varphi \pfrac{4\pi\sqrt{|mn|}}{c}
	=
	6\sqrt{|mn|} \sum_{j\geq 1} \frac{\overline{\rho_{j}(m)}\rho_{j}(n)}{\cosh\pi r_j} \check\varphi(r_j)
	\\
	+ \mfrac 12\int_{-\infty}^\infty \left|\frac{d}{d'}\right|^{ir} \frac {L(\frac 12-2ir,\chi_{d'})L(\frac 12+2ir,\chi_{d})\fS_{d'}(v,2ir)\fS_{d}(w,2ir)}{|\zeta(1+4ir)|^2 \cosh \pi r |\Gamma(\frac{1-k}{2}+ir)\Gamma(\frac{1+k}2-ir)|} \check\varphi(r) \, dr.
\end{multline}
\end{theorem}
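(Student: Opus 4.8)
The plan is to obtain Theorem~\ref{thm:KTFplus-space+} by applying Kohnen's plus-space projection to Blomer's opposite-sign Kuznetsov formula, following the derivation of \cite[Section~5]{andersen-duke-kloosterman} line by line with Blomer's \cite[Proposition~2]{blomer} in place of the same-sign formula used there. Blomer's formula, for the theta multiplier $\nu\in\{\nu_\vartheta,\bar\nu_\vartheta\}$ and inputs $m>0$, $n<0$, expresses $\sum_{c>0}c^{-1}S_k(m,n;c)\varphi(4\pi\sqrt{|mn|}/c)$, a weighted sum of ordinary half-integral weight Kloosterman sums on $\Gamma_0(4)$, as a discrete spectral sum over an orthonormal basis of weight-$k$ Maass cusp forms for $\Gamma_0(4)$ with multiplier $\nu$ together with a continuous contribution from the Eisenstein series attached to the three cusps of $\Gamma_0(4)$, each Fourier coefficient being weighted by the $K$-Bessel transform $\check\varphi$ displayed in the theorem; the decay hypotheses \eqref{eq:varphi-cond} are exactly what is needed for this identity to hold with both sides absolutely convergent.

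Next I would apply Kohnen's projection operator $\mathrm{pr}$ to both sides. On the geometric side $\mathrm{pr}$ reorganizes the ordinary Kloosterman sums into the plus-space sums $S_k^+(m,n;c)$ by means of the elementary identity writing $S_k^+$ as a short linear combination of $S_k$ at moduli $c$ and $c/4$ — the half-integral weight counterpart of the manipulation behind \eqref{eq:kohnen-identity}, and one insensitive to the sign of $mn$. On the spectral side $\mathrm{pr}$ annihilates every basis element outside $\mathcal V_k^+$; from the discrete spectrum this leaves, for $j\ge1$, exactly the terms $6\sqrt{|mn|}\,\overline{\rho_j(m)}\rho_j(n)(\cosh\pi r_j)^{-1}\check\varphi(r_j)$, the constant and the factor $\sqrt{|mn|}$ coming from the normalizations of $\mathrm{pr}$ and of the Whittaker Fourier coefficients $\rho_j$. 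The residual form $u_0=y^{1/4}\vartheta$ has Fourier coefficients supported on a single sign, so $\overline{\rho_0(m)}\rho_0(n)=0$ whenever $mn<0$; this is the one place the opposite-sign hypothesis is genuinely used, and it is why $u_0$ is absent from the formula.

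For the continuous part I would write out the Fourier expansions at $\infty$ of the plus-space Eisenstein series, extract the $m$-th and $n$-th coefficients, and evaluate the resulting $c$-sums by a standard Gauss-sum computation. Since $m>0$ and $n<0$, these coefficients involve the Whittaker functions $W_{k/2,ir}$ and $W_{-k/2,ir}$ respectively, whose normalizing Gamma factors combine into the denominator $|\Gamma(\tfrac{1-k}{2}+ir)\Gamma(\tfrac{1+k}{2}-ir)|$, while the $c$-sum produces $L(\tfrac12-2ir,\chi_{d'})L(\tfrac12+2ir,\chi_d)/|\zeta(1+4ir)|^2$ times the divisor corrections $\fS_{d'}(v,2ir)\fS_d(w,2ir)$ — present precisely because $(-1)^\lambda m=v^2d'$ and $(-1)^\lambda n=w^2d$ need not be fundamental — with $|d/d'|^{ir}$ the archimedean term and the overall $\tfrac12$ coming from symmetrizing in $r\mapsto-r$. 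Summing over the three cusps and over $u_0,\bar u_0$ (the latter already excluded) yields the displayed integral.

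The work thus reduces to assembling known ingredients, but two points require care. First, one must verify that Blomer's formula specializes to $\Gamma_0(4)$ with the present normalizations — in particular that his opposite-sign Bessel transform coincides with $\check\varphi$ — and that the interchange of the $c$-sum with the spectral expansion is valid under only \eqref{eq:varphi-cond}; this last point is the genuinely delicate one, but it is exactly \cite[Proposition~5.6]{andersen-duke-kloosterman}, which is established there for all $m,n\equiv0,1\pmod4$ with no restriction on signs and so applies here verbatim. Second, the explicit evaluation of the plus-space Eisenstein coefficients at opposite-sign indices is a nontrivial but purely mechanical matching of Gauss sums, Gamma factors and the sums $\fS_d$; since the analogous same-sign computation appears in full in \cite{andersen-duke-kloosterman}, the only change is to use $W_{-k/2,ir}$ in the slot corresponding to the negative index. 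I therefore expect the Eisenstein bookkeeping to be the most laborious step, while no new hard analysis is needed.
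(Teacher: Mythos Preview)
Your proposal is correct and is essentially the same approach the paper takes: the paper does not give a standalone proof of Theorem~\ref{thm:KTFplus-space+} but states that it follows from Blomer's opposite-sign Kuznetsov formula \cite[Proposition~2]{blomer} by repeating the plus-space procedure of \cite[Section~5]{andersen-duke-kloosterman} verbatim, noting (exactly as you do) that the only delicate step, \cite[Proposition~5.6]{andersen-duke-kloosterman}, was already proved there for all $m,n\equiv 0,1\pmod 4$ with no sign restriction. Your observation that $u_0$ drops out because $\overline{\rho_0(m)}\rho_0(n)=0$ when $mn<0$ is also precisely the point the paper makes just before stating the theorem.
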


\section{Proof of Theorem~\ref{thm:Sk-sums}} \label{sec:proof}

Let $a=4\pi\sqrt{|mn|}$ and $x\geq3$ and let $1\leq T \leq x/3$ be a free parameter to be chosen later.
Following \cite{sarnak-tsimerman}, we fix a test function $\varphi=\varphi_{a,x,T}:[0,\infty)\to[0,1]$ satisfying
\begin{enumerate}[(i)] \setlength\itemsep{.5em}
	\item $\varphi(t)=1$ for $\mfrac{a}{2x}\leq t\leq \mfrac ax$,
	\item $\varphi(t)=0$ for $t\leq \mfrac{a}{2x+2T}$ and $t\geq \mfrac{a}{x-T}$,
	\item $\varphi'(t) \ll \left( \mfrac{a}{x-T} - \mfrac ax \right)^{-1} \ll \mfrac{x^2}{aT}$, and
	\item $\varphi$ and $\varphi'$ are piecewise monotonic on a fixed number of intervals (whose number is independent of $a,x,T$).
\end{enumerate}
We apply the plus space Kuznetsov formula in Theorem~\ref{thm:KTFplus-space+} with this test function and we estimate each of the terms on the right-hand side.
For this, we require an estimate for the integral transform $\check\varphi(r)$.
All but the first estimate in the following theorem are proved in \cite[Section~6]{aa-kloosterman}.
There are some minor errors in that proof, and we have provided the corrections, along with the proof of the first estimate, in Appendix~\ref{sec:appendix}.
\begin{theorem} \label{thm:phicheck}
Suppose that $a,x,T$, and $\varphi=\varphi_{a,x,T}$ are as above.
Then
\begin{equation} \label{eq:phi-est}
	\check\varphi(r) \ll
	\begin{dcases}
		r^{-\frac32} & \text{ if } r\leq 1, \\
		e^{-\frac 12r} & \text{ if } \, 1\leq r\leq \mfrac{a}{8x}, \\
		r^{-1} & \text{ if } \, \max\big(\mfrac a{8x},1\big) \leq r\leq \mfrac ax, \\
		r^{-\frac 32}\min\Big( 1, \mfrac x{rT} \Big) & \text{ if } \, r\geq \max\big(\mfrac ax, 1\big).
	\end{dcases}	
\end{equation}
\end{theorem}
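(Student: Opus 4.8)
The plan is to substitute uniform asymptotics for the $K$-Bessel function into
\[
	\check\varphi(r)=2\cosh(\pi r)\int_0^\infty K_{2ir}(t)\,\varphi(t)\,\frac{dt}{t},
\]
and to treat $t$ in three regimes determined by its position relative to the turning point $t=2r$. In the \emph{oscillatory} regime $t\le 2r-r^{1/3}$ one has $2\cosh(\pi r)K_{2ir}(t)=c\,(4r^2-t^2)^{-1/4}\cos(\Phi(t)+\phi_0)+(\text{lower order})$ with $\Phi'(t)=-\sqrt{4r^2-t^2}/t$; in the \emph{transition} regime $|t-2r|\le r^{1/3}$ one has $2\cosh(\pi r)K_{2ir}(t)\ll r^{-1/3}$ (Airy behaviour); and in the \emph{exponential} regime $t\ge 2r+r^{1/3}$ one has $2\cosh(\pi r)K_{2ir}(t)\ll (t^2-4r^2)^{-1/4}\exp\bigl(-\int_{2r}^{t}\sqrt{s^2-4r^2}\,s^{-1}\,ds\bigr)$, with the exponent negative and decreasing in $t$. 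The prefactor $2\cosh(\pi r)$ is present precisely to cancel the $e^{-\pi r}$ decay of $K_{2ir}$, so $\check\varphi(r)$ is of size $O(1)$ and all decay in $r$ must come either from oscillation (extracted by repeated integration by parts) or from the shape of $\varphi$, which is supported on an interval of length $\asymp a/x$ around $t\asymp a/x$, equals $1$ on the core $[\tfrac{a}{2x},\tfrac ax]$, and rises from $0$ to $1$ on two ``ramp'' intervals of length $\asymp aT/x^2$ on which $\varphi'\ll x^2/(aT)$ and $\varphi'$ is piecewise monotone with boundedly many pieces.

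For the first two ranges the Bessel estimates already suffice. When $r\le 1$ we have $2\cosh(\pi r)\asymp 1$ and, from the power series, $K_{2ir}(t)\ll\min(r^{-1},1+|\log t|)$ for $t\le 1$ together with exponential decay for $t\ge 1$; since $\varphi$ is supported on an interval of bounded logarithmic length this gives $\check\varphi(r)\ll\min(r^{-1},1+|\log(a/x)|)\ll r^{-1}\le r^{-3/2}$. When $1\le r\le a/(8x)$ the constraint forces every $t$ in the support to satisfy $t\ge\tfrac{3a}{8x}\ge 3r$, so we are entirely in the exponential regime; estimating $\int_{2r}^{t}\sqrt{s^2-4r^2}\,s^{-1}\,ds$ at $t=3r$ (this is exactly where the numerical constant $8$ enters) shows the exponent is $\le-r/2$ on the support, whence $\check\varphi(r)\ll e^{-r/2}$.

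When $\max(a/(8x),1)\le r\le a/x$ the turning point may lie inside the support. The transition window contributes $\ll r^{-1/3}\cdot r^{1/3}\cdot r^{-1}=r^{-1}$ (length $\asymp r^{1/3}$, and $t\asymp r$ there); on the oscillatory part $t\le 2r-r^{1/3}$ a single integration by parts against $e^{i\Phi}$, using $|\Phi'(t)|=\sqrt{4r^2-t^2}/t$, again gives $\ll r^{-1}$; the exponential part is smaller. Hence $\check\varphi(r)\ll r^{-1}$. Finally, when $r\ge\max(a/x,1)$ — the range for which the method is optimised — $2r$ exceeds the whole support, so we are in the oscillatory regime with amplitude $\asymp r^{-1/2}$ and $|\Phi'(t)|\asymp r/t\asymp rx/a\ge 1$. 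One integration by parts of the full (smooth) integral against $e^{i\Phi}$ splits it into a piece proportional to $\varphi/t^2$ (supported on all of the support) and one proportional to $\varphi'/t$ (supported on the ramps of width $\asymp aT/x^2$), each of which is $\ll r^{-3/2}$ by a short computation using $|\Phi'|\asymp rx/a$; so $\check\varphi(r)\ll r^{-3/2}$. To improve this when $rT>x$, integrate by parts once more: the $\varphi/t^2$ piece becomes $O(r^{-5/2})$, while the $\varphi'/t$ piece is written as $\int H\,d\varphi'$ with $H(t)=\int^t\frac{e^{i\Phi}r^{-1/2}}{i\Phi'\,t}$, so that $|H|\ll r^{-5/2}a/x$ after one more integration by parts on $H$, and then $\bigl|\int H\,d\varphi'\bigr|\le\sup|H|\cdot\|\varphi'\|_{\mathrm{TV}}\ll\frac{r^{-5/2}a}{x}\cdot\frac{x^2}{aT}=r^{-3/2}\cdot\frac{x}{rT}$; this uses only that $\varphi'$ is piecewise monotone, not any bound on $\varphi''$. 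Taking the minimum gives $\check\varphi(r)\ll r^{-3/2}\min(1,x/(rT))$, as claimed.

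\textbf{Main obstacle.} The delicate points are, first, extracting the extra factor $x/(rT)$ in the last range without control on $\varphi''$: this is what forces the second integration by parts to be carried out against the bounded-variation measure $d\varphi'$, and one must track the ramp width $\asymp aT/x^2$ so that the $a$'s cancel and the saving comes out as $x/(rT)$ rather than something $a$-dependent. Second, one must keep every implied constant uniform in $a,x,T$ simultaneously across the four glued ranges — verifying in particular that the constant in the threshold $a/(8x)$ is large enough to push the exponent past $-r/2$ in the second range, and that the error terms in the Bessel asymptotics (which admit oscillatory expansions of amplitude $\asymp r^{-5/2}$ on the relevant range) are uniformly negligible, including near the turning point where the Airy approximation must be patched to the oscillatory and exponential ones. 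These uniformity issues are precisely where the minor errors of \cite{aa-kloosterman} occur, and they are corrected, along with the proof of the first estimate, in Appendix~\ref{sec:appendix}.
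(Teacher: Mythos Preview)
Your plan is correct and follows essentially the same route as the paper, which defers the bulk of the argument to \cite[Section~6]{aa-kloosterman} and in Appendix~\ref{sec:appendix} only (i) supplies the missing first estimate $r\le 1$ and (ii) corrects the Balogh/Airy uniform asymptotics for $K_{iv}(vz)$ on $z\in(0,1)$ that feed into the other three ranges. Your three regimes (oscillatory, Airy transition, exponential), the single integration by parts for the $r^{-1}$ and $r^{-3/2}$ bounds, and the second integration by parts against the bounded-variation measure $d\varphi'$ to extract the extra $x/(rT)$ are exactly the mechanism of \cite{aa-kloosterman,sarnak-tsimerman}; and your treatment of $r\le 1$ (power-series bound $K_{2ir}(t)\ll r^{-1}$ on $t\le 1$ plus bounded logarithmic support length) is equivalent to the paper's splitting at $u=r$ with the inequality $|K_{2ir}(u)|\le \pi\cosh(2\pi r)^{1/2}\sinh(2\pi r)^{-1}I_0(u)$, both giving $\check\varphi(r)\ll r^{-1}\le r^{-3/2}$.
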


We first give two estimates for the contribution from the Maass cusp forms
\begin{equation}
	\mathcal K^m = \sqrt{|mn|} \sum_{j\geq 1} \frac{\overline{\rho_{j}(m)}\rho_{j}(n)}{\cosh\pi r_j} \check\varphi(r_j).
\end{equation}
The first estimate is Theorem~\ref{thm:maass-young-est} below, which we quote directly from \cite[Section~6]{andersen-duke-kloosterman}. 
This estimate uses Young's \cite{young-subconvexity} Weyl-type hybrid subconvexity estimate for central values of $L$-functions of Maass cusp forms of integral weight for $\SL_2(\Z)$ twisted by Dirichlet characters (see also Appendix~A of \cite{andersen-duke-kloosterman}).

\begin{theorem} \label{thm:maass-young-est}
Let $k=\pm \frac 12=\lambda+\frac 12$.
Suppose that $m>0$, $n<0$ and write $(-1)^\lambda m=v^2 d'$ and $(-1)^\lambda n=w^2 d$ with $d,d'$ fundamental discriminants.
Then
\begin{equation}\label{eq:maass-young-est}
	\sqrt{|mn|} \sum_{r_j \leq X} \frac{|\rho_j(m)\rho_j(n)|}{\cosh\pi r_j} \ll |dd'|^{\frac 16} (vw)^{\theta} X^2 (|mn|X)^\varepsilon.
\end{equation}
\end{theorem}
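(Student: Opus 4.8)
\medskip
\noindent\emph{Proof strategy.}
The plan is to separate the two indices by Cauchy--Schwarz and then run each resulting diagonal sum through an explicit Waldspurger-type period formula followed by subconvexity. Since $m>0>n$ we have $mn<0$, so $u_0$ contributes nothing (and in any case $r_0$ is not real); thus the sum is over $j\ge 1$. By Cauchy--Schwarz,
\[
	\sqrt{|mn|}\sum_{r_j\le X}\frac{|\rho_j(m)\rho_j(n)|}{\cosh\pi r_j}
	\le\left(|m|\sum_{r_j\le X}\frac{|\rho_j(m)|^2}{\cosh\pi r_j}\right)^{1/2}\left(|n|\sum_{r_j\le X}\frac{|\rho_j(n)|^2}{\cosh\pi r_j}\right)^{1/2},
\]
so it suffices to show $|m|\sum_{r_j\le X}|\rho_j(m)|^2/\cosh\pi r_j\ll|d'|^{1/3}v^{2\theta}(|m|X)^{\varepsilon}X^2$ together with the analogue obtained by replacing $(m,d',v)$ with $(n,d,w)$; multiplying the two square roots then produces the claimed $|dd'|^{1/6}(vw)^{\theta}$.

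First I would reduce to fundamental discriminants. The plus-space coefficients obey Kohnen-type Hecke relations expressing $\sqrt{|m|}\,\rho_j(m)$, when $(-1)^\lambda m=v^2d'$, as a short divisor sum over $e\mid v$ of $\sqrt{|m_0|}\,\rho_j(m_0)$ (here $m_0$ is the fundamental index with $(-1)^\lambda m_0=d'$) weighted by Hecke eigenvalues of the Shimura lift $v_j$. Invoking the admissible pointwise bound $|\lambda_j(t)|\ll_\varepsilon t^{\theta+\varepsilon}$ toward the Ramanujan conjecture for weight-$0$ Maass forms on $\PSL_2(\Z)$, this gives $|m|\,|\rho_j(m)|^2\ll v^{2\theta+\varepsilon}\,|m_0|\,|\rho_j(m_0)|^2$, so it is enough to treat $m$ that is, up to sign, a fundamental discriminant, at the cost of the factor $v^{2\theta}$.

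The arithmetic heart is then the explicit Waldspurger formula in the Kohnen plus space (Baruch--Mao \cite{baruch-mao}, in the tradition of Katok--Sarnak and Kohnen--Zagier), which identifies $|\rho_j(m_0)|^2$---up to explicit $\Gamma$-factors in $r_j$ and the normalizing quantity $L(1,\mathrm{Sym}^2 v_j)$---with the central value $L(\tfrac12,v_j\otimes\chi_{d'})$. Using the Hoffstein--Lockhart lower bound $L(1,\mathrm{Sym}^2 v_j)\gg_\varepsilon r_j^{-\varepsilon}$, Young's \cite{young-subconvexity} Weyl-type hybrid subconvexity estimate $L(\tfrac12,v_j\otimes\chi_{d'})\ll_\varepsilon\bigl(|d'|(1+|r_j|)\bigr)^{1/3+\varepsilon}$, and the Weyl law $\#\{j:r_j\le X\}\asymp X^2$ for the plus space on $\Gamma_0(4)$---together with the spectral large sieve to handle the diagonal in the $d'$-aspect---one would sum over $r_j\le X$ dyadically, using the decay of the archimedean factors and the mean-value estimate to keep the spectral sum of size $X^2$ rather than a larger power of $X$, and thereby reach the displayed bound.

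I expect the last step to be the main obstacle. Applied term by term, subconvexity overshoots in the $X$-aspect, whereas the spectral large sieve on its own overshoots in the conductor aspect, giving $|d'|^{1/2}$ in place of $|d'|^{1/3}$; the optimal combination $|dd'|^{1/6}X^2$ emerges only from interpolating between the two, with all estimates kept uniform in $m$, $n$, and $X$. A secondary, more routine, difficulty is the bookkeeping of the non-fundamental indices $v,w$ through the Kohnen--Hecke relations. As noted above, in the present paper this estimate is simply quoted from \cite[Section~6]{andersen-duke-kloosterman}, whose argument applies here verbatim.
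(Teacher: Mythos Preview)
Your proposal is correct and matches the paper exactly: as you yourself note at the end, the paper does not prove Theorem~\ref{thm:maass-young-est} here but simply quotes it from \cite[Section~6]{andersen-duke-kloosterman} (with the remark that Young's hybrid subconvexity \cite{young-subconvexity} is the key arithmetic input). Your sketch of the ingredients---Cauchy--Schwarz, the Kohnen--Hecke reduction to fundamental indices picking up $(vw)^\theta$, the Baruch--Mao/Katok--Sarnak period formula, Hoffstein--Lockhart, and Young's Weyl-type bound---accurately reflects the argument in that reference; the only point to sharpen is that the $X^2$ in the spectral aspect is not obtained by a generic ``interpolation'' with the large sieve but rather from the precise archimedean gamma factors in the Waldspurger formula, which behave oppositely for the positive and negative indices $m,n$ and therefore cancel in the product before one sums over $r_j$.
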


Our second estimate comes from Section~4 of \cite{andersen-duke-kloosterman}.

\begin{theorem} \label{thm:mve-general}
Suppose that $(k,\nu)=(\frac 12,\nu_\vartheta)$ or $(-\frac 12, \bar\nu_\vartheta)$.
Then for all $n\neq 0$ we have
\begin{equation} \label{eq:mve}
	n \sum_{X\leq r_j\leq 2X} |\rho_{j}(n)|^2 e^{-\pi r_j} \ll
		X^{-k\sgn(n)}\left(X^2 + |n|^{\frac 12+\varepsilon}\right)X^{\varepsilon}.
\end{equation}
\end{theorem}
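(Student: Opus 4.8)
The plan is to derive the estimate from the same-sign companion of the plus-space Kuznetsov trace formula of Theorem~\ref{thm:KTFplus-space+} (obtained from Proskurin's generalization of Kuznetsov's formula by the same procedure, and now carrying a diagonal term since we will apply it with $m=n$). First I would pick an even test function $h=h_X\colon\R\to\R$ with $h\ge 0$, with $h(r)\ge 1$ on $X\le|r|\le 2X$, and of Schwartz type concentrated at scale $X$, and feed it into the Kuznetsov formula with $m=n$. Because the forms $u_j$ with $j\ge 1$ all have real spectral parameters $r_j>0$, there is no exceptional spectrum, $e^{-\pi r_j}\asymp(\cosh\pi r_j)^{-1}$, and both the cuspidal spectral sum $\sum_j|\rho_j(n)|^2 h(r_j)/\cosh\pi r_j$ and the continuous-spectrum contribution (an integral of $h$ against the squared modulus of Eisenstein-series Fourier coefficients) are nonnegative. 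Dropping them and using the normalizing factor $\sqrt{|mn|}=|n|$ preceding the spectral sum yields
\begin{equation}
	n\sum_{X\le r_j\le 2X}|\rho_j(n)|^2 e^{-\pi r_j}\ll|\mathcal D_h|+|\mathcal K_h|,
\end{equation}
where $\mathcal D_h$ is the diagonal term and $\mathcal K_h=\sum_{4\mid c}c^{-1}S_k^+(n,n;c)\,\Phi_h(4\pi|n|/c)$ is the Kloosterman-sum term, $\Phi_h$ being the weight-$k$ (same-sign) Bessel transform of $h$.

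Next I would estimate the two terms. The diagonal term is, up to a constant, the integral of $h$ against the weight-$k$ Plancherel density, which by Stirling has size $\asymp r^{1-k\sgn(n)}$ for large $r$ — this is where the half-integral-weight factor $X^{-k\sgn(n)}$ is born — so integrating the $X$-localized $h$ gives $\mathcal D_h\ll X^{2-k\sgn(n)+\ep}$. For the Kloosterman-sum term I would apply the Weil bound \eqref{eq:weil-bound} in the form $|S_k^+(n,n;c)|\ll\sigma_0(c)\gcd(n,c)^{1/2}\sqrt c$, together with the size and support of $\Phi_h$: it decays rapidly once $4\pi|n|/c\gg X$ (so $c\ll|n|/X$ contributes negligibly) and is $O\big((|n|/c)^{2}\big)$ once $4\pi|n|/c\ll1$ (so the tail $c\gg|n|$ converges and is small), leaving the range $|n|/X\ll c\ll|n|$, which contributes $\ll X^{-k\sgn(n)}|n|^{1/2+\ep}$ after summing the divisor and gcd weights. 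Adding the two bounds gives the stated estimate.

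The main obstacle is the transform analysis in the second step: one needs the size, concentration, and decay of the weight-$k$ Bessel transform $\Phi_h$ of a dyadically localized test function with enough precision to extract both the power $|n|^{1/2}$ and the exact weight-dependent factor $X^{-k\sgn(n)}$, uniformly in $n$ — equivalently, to make the Weil-bound estimate of $\mathcal K_h$ sharp. This is precisely the kind of computation carried out in Section~4 of \cite{andersen-duke-kloosterman}, which we quote. A secondary (but routine) point is producing a single test function that is at once nonnegative on the whole spectrum, bounded below on $[X,2X]$, and compatible with these transform bounds.
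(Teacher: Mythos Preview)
The paper does not prove this statement at all: it is quoted verbatim from Section~4 of \cite{andersen-duke-kloosterman} (``Our second estimate comes from Section~4 of \cite{andersen-duke-kloosterman}''), with no argument given. Your sketch --- apply the same-sign (spectral-to-Kloosterman) Kuznetsov formula with $m=n$, use positivity of the cuspidal and continuous contributions to drop the Eisenstein term, bound the diagonal term via Stirling on the weight-$k$ Plancherel density, and bound the Kloosterman term by the Weil bound \eqref{eq:weil-bound} together with the size/support of the Bessel transform of $h_X$ --- is exactly the method employed in that reference, and you yourself cite it for the transform analysis that carries the weight-dependent factor $X^{-k\sgn(n)}$. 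So your proposal and the paper agree: both defer to \cite{andersen-duke-kloosterman}, and your outline accurately summarizes what is done there.
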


The estimation of the Maass cusp form contribution follows the general structure of the proof of Proposition~5 of \cite{sarnak-tsimerman} and the proof of Theorem~9.1 of \cite{aa-kloosterman}.
We split the sum $\mathcal K^m$ into three ranges
\begin{equation}
	\mathcal K^m_1 = \sum_{1\leq r_j \ll \frac{a}{x}}, \quad \mathcal K^m_2 = \sum_{r_j\asymp \frac ax}, \quad \mathcal K^m_3 = \sum_{r_j \gg \frac ax},
\end{equation}
corresponding to the second, third, and fourth ranges in Theorem~\ref{thm:phicheck}.
Recall that $a\asymp \sqrt{|mn|}$.
For the first range we use Theorem~\ref{thm:maass-young-est} to obtain
\begin{align}
	\mathcal K^m_1 &\ll \sqrt{|mn|} \sum_{r_j\geq 1} \frac{|\rho_{j}(m)\rho_{j}(n)|}{\cosh\pi r_j} e^{-r_j/2} \\
	&\ll \sqrt{|mn|} \sum_{T=1}^\infty e^{-T/2} \sum_{T\leq r_j \leq T+1} \frac{|\rho_{j}(m)\rho_{j}(n)|}{\cosh\pi r_j} \ll |dd'|^{\frac 16}(vw)^\theta|mn|^\ep.
\end{align}
In the second range, again using Theorem~\ref{thm:maass-young-est} we have
\begin{equation}
	\mathcal K^m_2 \ll \sqrt{|mn|} \sum_{r_j\asymp \frac ax} \frac{|\rho_{j}(m)\rho_{j}(n)|}{\cosh\pi r_j} r_j^{-1} \ll x \sum_{r_j\asymp \frac ax} \frac{|\rho_{j}(m)\rho_{j}(n)|}{\cosh\pi r_j} \ll |dd'|^{\frac 23}(vw)^{1+\theta}x^{-1}(|mn|x)^\ep.
\end{equation}

To estimate $\mathcal K^m_3$ we consider the dyadic sums
\begin{equation}
	\mathcal K^m(A) = \sqrt{|mn|} \sum_{A\leq r_j < 2A} \frac{\bar{\rho_j(m)}\rho_j(n)}{\cosh\pi r_j} \check\varphi(r_j)
\end{equation}
for $A\geq 1$.
Applying Cauchy-Schwarz and Theorem~\ref{thm:mve-general} we obtain the estimate:
\begin{equation}
  	\sqrt{|mn|} \sum_{r_j \leq A} \frac{|\rho_j(m)\rho_j(n)|}{\cosh\pi r_j} \ll \left(A^2+|mn|^{\frac 14}A\right)(|mn|A)^\varepsilon.
\end{equation} 
Together with Theorem~\ref{thm:maass-young-est}, this implies that
\begin{align}
	\sqrt{|mn|} \sum_{A\leq r_j < 2A} \frac{|\rho_j(m)\rho_j(n)|}{\cosh\pi r_j} 
	& \ll \min\left(A^2+|mn|^{\frac 14}A, |dd'|^{\frac 16}(vw)^\theta A^2\right)(|mn|A)^\ep
	\\ &\ll \left(A^2 + |dd'|^{\frac {5}{24}}(vw)^{\frac 14+\frac 12\theta}A^{\frac 32}\right)(|mn|A)^\varepsilon,
\end{align}
where we have used that $\min(y,z)\leq (yz)^{1/2}$.
By Theorem~\ref{thm:phicheck} it follows that
\begin{equation}
	\mathcal K^m(A) \ll \min\left(1,\frac{x}{AT}\right)\left(A^{\frac 12} + |dd'|^{\frac {5}{24}}(vw)^{\frac 14+\frac 12\theta}\right)(|mn|A)^\varepsilon,
\end{equation}
so by summing the dyadic pieces we obtain
\begin{equation}
	\mathcal K^m_3 \ll \left(x^{\frac 12}T^{-\frac 12} + |dd'|^{\frac 5{24}}(vw)^{\frac 14+\frac 12\theta}\right) (|mn|x)^{\ep}.
\end{equation}
In total,
\begin{equation} \label{eq:K-m-est}
	\mathcal K^m \ll \left(|dd'|^{\frac 23}(vw)^{1+\theta}x^{-1} + x^{\frac 12}T^{-\frac 12} + |dd'|^{\frac 5{24}}(vw)^{\frac 14+\frac 12\theta}\right) (|mn|x)^{\ep}.
\end{equation}

We turn to the estimate of the integral
\begin{equation}
	\mathcal K^e = \int_{-\infty}^\infty \left|\frac{d}{d'}\right|^{ir} \frac {L_{d'}(-r)L_d(r)\fS_{d'}(v,2ir)\fS_{d}(w,2ir)}{|\zeta(1+4ir)|^2 \cosh \pi r |\Gamma(\frac{1-k}{2}+ir)\Gamma(\frac{1+k}2-ir)|} \check\varphi(r) \, dr,
\end{equation}
where, for brevity, we have written $L(\frac 12+2ir,\chi_{d})=L_d(r)$.
By symmetry it suffices to estimate the integrals $\mathcal K^e_0 = \int_0^1$ and $\mathcal K^e_1 = \int_1^\infty$.
Estimating the divisor sums trivially we find that $|\fS_d(w,s)| \leq \sigma_0(w)^2 \ll w^\ep$.
For $r\in (0,1]$ we have 
\[
|\zeta(1+4ir)|^2\gg r^{-2} \text{ and } \cosh\pi r |\Gamma(\tfrac{1-k}{2}+ir)\Gamma(\tfrac{1+k}2-ir)| \gg 1,
\]
so by Theorem~\ref{thm:phicheck} we have the estimate
\begin{equation} \label{eq:Ke0-est}
	\mathcal K^e_0 \ll (vw)^\varepsilon \int_0^1 \left|L_{d'}(r) L_d(r)\right| \, dr.
\end{equation}
Since $\cosh\pi r |\Gamma(\frac{1-k}{2}+ir)\Gamma(\frac{1+k}2-ir)| \sim \pi$ and $|\zeta(1+4ir)|^{-1}\ll r^{\varepsilon}$ for large $r$ we have by Theorem~\ref{thm:phicheck} that
\begin{equation}
	\mathcal K^e_1 \ll (vw)^\varepsilon \int_1^\infty \big|L_{d'}(r)L_d(r)\big| \mfrac{dr}{r^{3/2-\varepsilon}} + (vw)^\ep \int_{a/(8x)}^{a/x}|L_{d'}(r)L_{d}(r)| \mfrac{dr}{r^{1-\ep}}.
\end{equation}
In the first integral we multiply each Dirichlet $L$-function by $r^{-3/8}$ and the last factor by $r^{3/4}$. 
Applying H\"older's inequality in the case $\frac 16+\frac 16+\frac 23=1$ to both integrals, we obtain
\begin{multline}\label{eq:Ke1-holder}
	\mathcal K^e_1 \ll (vw)^\varepsilon \left(\int_1^\infty |L_{d'}(r)|^6 \mfrac{dr}{r^{9/4}}\right)^{\frac 16} \left(\int_1^\infty |L_{d}(r)|^6 \mfrac{dr}{r^{9/4}}\right)^{\frac 16}  \left(\int_1^\infty \mfrac{dr}{r^{9/8-\varepsilon}}\right)^{\frac 23} \\
	+ (vw)^\ep \sum_{\frac{a}{8x} \leq T < \frac{a}{x}} \left(\int_T^{T+1} |L_{d'}(r)|^6 \, dr\right)^{\frac 16}\left(\int_T^{T+1} |L_{d}(r)|^6 \, dr\right)^{\frac 16} \left(\int_T^{T+1} \frac{dr}{r^{3/2-\ep}}\right)^{\frac 23}.
\end{multline}
Young \cite{young-subconvexity} proved that
\begin{equation}
	\int_{T}^{T+1} |L_d(r)|^6 \, dr \ll (|d|(1+T))^{1+\varepsilon},
\end{equation}
from which it follows that
\begin{equation}
	\int_1^\infty |L_d(r)|^6 \mfrac{dr}{r^{9/4}} \leq \sum_{T=1}^\infty \mfrac{1}{T^{9/4}} \int_T^{T+1} |L_d(r)|^6 \, dr \ll |d|^{1+\varepsilon}
\end{equation}
and
\begin{equation}
	\sum_{\frac{a}{8x} \leq T < \frac{a}{x}} \left(\int_T^{T+1} |L_{d'}(r)|^6 \, dr\right)^{\frac 16}\left(\int_T^{T+1} |L_{d}(r)|^6 \, dr\right)^{\frac 16} \left(\int_T^{T+1} \frac{dr}{r^{3/2-\ep}}\right)^{\frac 23} \ll |dd'|^{\frac 23+\ep}(vw)^{1+\ep}x^{-1+\ep}.
\end{equation}
We also have $K_0^e \ll (vw)^\varepsilon |dd'|^{\frac 16+\varepsilon}$.
These estimates, together with \eqref{eq:Ke1-holder} show that
\begin{equation} \label{eq:K-e-est}
	\mathcal K^e \ll |dd'|^{\frac 23+\ep}(vw)^{1+\ep}x^{-1+\ep}.
\end{equation}

Putting \eqref{eq:K-m-est} and \eqref{eq:K-e-est} together, we find that
\begin{equation}
	\sum_{4\mid c>0} \frac{S_k^+(m,n,c)}{c} \varphi\pfrac{4\pi\sqrt{|mn|}}{c}  \ll \left(|dd'|^{\frac 23}(vw)^{1+\theta}x^{-1} + x^{\frac 12}T^{-\frac 12} + |dd'|^{\frac 5{24}}(vw)^{\frac 14+\frac 12\theta}\right) (|mn|x)^{\ep}.
\end{equation}
To unsmooth the sum of Kloosterman sums, we argue as in \cite{sarnak-tsimerman,aa-kloosterman} to obtain
\begin{equation}
	\sum_{4\mid c>0} \frac{S^+_k(m,n,c)}{c} \varphi\pfrac{4\pi\sqrt{|mn|}}{c} - \sum_{\substack{x\leq c< 2x \\ 4\mid c}} \frac{S^+_k(m,n,c)}{c} \ll \frac{T\log x}{\sqrt x} |mn|^\varepsilon.
\end{equation}
Choosing $T=x^{\frac 23}$ we obtain
\begin{equation} \label{eq:kloo-dyadic-est}
	\sum_{x\leq c< 2x} \frac{S^+_k(m,n,c)}{c} \ll \left(x^{\frac 16} + |dd'|^{\frac 23}(vw)^{1+\theta}x^{-1} + |dd'|^{\frac 5{24}}(vw)^{\frac 14+\frac 12\theta}\right) (|mn|x)^{\ep}.
\end{equation}
To prove Theorem~\ref{thm:Sk-sums} we sum the inital segment $c\leq |dd'|^\alpha(vw)^\beta$ and apply the Weil bound \eqref{eq:weil-bound}, then sum the dyadic pieces for $c\geq |dd'|^\alpha(vw)^\beta$ using \eqref{eq:kloo-dyadic-est}.
To balance the resulting terms we take $\alpha=\frac 49$ and $\beta=\frac 23(1+\theta)$, which gives the bound
\begin{equation} \label{eq:S+-sum-final}
	\sum_{c\leq x} \frac{S^+_k(m,n,c)}{c} \ll \left(x^{\frac 16} + |dd'|^{\frac 29}(vw)^{\frac 13(1+\theta)}\right)(|mn|x)^\varepsilon.
\end{equation}
This completes the proof. \qed

\begin{remark}
	If we assume the Lindel\"of hypothesis for the central values $L(\tfrac 12,v_j\times \chi_d)$, where $v_j$ is the Shimura lift of $u_j$, we can replace the exponent $\tfrac16$ by $0$ in Theorem~\ref{thm:maass-young-est}.
	If, additionally, we assume the Lindel\"of hypothesis for $L(\tfrac 12,\chi_d)$, we can replace the exponent $\frac 29$ by $\frac 16$ in \eqref{eq:S+-sum-final}.
\end{remark}

\appendix
\section{Proof of Theorem~\ref{thm:phicheck}} \label{sec:appendix}

In this section we prove the first estimate of Theorem~\ref{thm:phicheck} and correct some mistakes in the proof given in \cite[Section~6]{aa-kloosterman} for the other three estimates of that theorem.
The mistakes in \cite{aa-kloosterman} all stem from a misunderstanding of the choice of branch cut implicit in Balogh's \cite{balogh} uniform asymptotic expansion for the $K$-Bessel function $K_{iv}(vz)$ when $z\in (0,1)$.
The paper \cite{GST} provides a clear reference for this asymptotic expansion in both regions $z<1$ and $z>1$.
Here we prove corrected statements of some of the propositions in Section~6 of \cite{aa-kloosterman}; the remaining steps of the proof given in that paper follow with only minor changes and are omitted here.

Balogh \cite{balogh} (see also \cite[Section~4]{GST}) gives a uniform asymptotic expansion for $K_{iv}(vz)$ in terms of the Airy function $\Ai$ and its derivative $\Ai'$. 
For $z\in (0,1)$ define
\[
	w(z) = \arccosh\pmfrac 1z - \sqrt{1-z^2} \quad \text{ and } \quad \zeta = \left[\mfrac 32 w(z)\right]^{\frac 23}>0.
\]
Taking $m=1$ in equation (2) of \cite{balogh} (see also \cite[(12)]{GST}) we have
\begin{multline} \label{eq:balogh}
	e^{\frac{\pi v}{2}} K_{iv}(vz) 
	= \frac{\pi\sqrt 2}{v^{\frac 13}} \pfrac{\zeta}{1-z^2}^{\frac 14} 
	\left\{ \Ai\left(-v^{\frac 23}\zeta\right)\left[1+\frac{A(z)}{v^2}\right] \right. \\ + 
	\left.\pmfrac 23^{\frac 13}
	\Ai'\left(-v^{\frac 23}\zeta\right)\frac{B(z)}{v(vw(z))^{\frac 13}}\right\} + 
	O\left(\frac{v^{-\frac 72}}{(1-z^2)^{\frac 14}}\right),
\end{multline}
uniformly for $v\in [1,\infty)$, where
\begin{align}
	A(z) &= \frac{455}{10368\,w(z)^2} - \frac{7(3z^2+2)}{1728(1-z^2)^{\frac 32}w(z)} - \frac{81z^4+300z^2+4}{1152(1-z^2)^3}, \\
	B(z) &=  \frac{3z^2+2}{24(1-z^2)^{\frac 32}} - \frac{5}{72w(z)}.\label{B0def}
\end{align}
Both $A(z)$ and $B(z)$ are $O(1)$ for $z\in (0,1)$.

\begin{proposition} \label{prop:kbessel-asy-small}
Suppose that $z\in (0, 3/4]$ and that $v\geq 1$. Then
\begin{equation} \label{eq:kbessel-asy-small}
	e^{\frac{\pi v}{2}} K_{iv}(vz) =  \frac{\sqrt{2\pi}}{v^{\frac 12}(1-z^2)^{\frac 14}} \left[\cos\left(vw(z)-\mfrac{\pi}{4}\right)+\sin\left(vw(z) - \mfrac{\pi}{4}\right)\frac{3z^2+2}{24v(1-z^2)^{\frac 32}} \right] + O\big( v^{-\frac 52} \big).
\end{equation}
\end{proposition}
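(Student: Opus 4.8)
The plan is to derive Proposition~\ref{prop:kbessel-asy-small} from Balogh's uniform expansion \eqref{eq:balogh} by replacing the Airy function and its derivative with their classical asymptotics, which are valid precisely because $z\in(0,3/4]$ keeps the argument $-v^{2/3}\zeta$ bounded away from the turning point. First I would note that for $z\in(0,3/4]$ we have $w(z)\asymp 1$ (bounded above and below), hence $\zeta\asymp 1$, so $v^{2/3}\zeta\to\infty$ uniformly as $v\to\infty$. Then I would invoke the standard asymptotic expansions
\begin{align}
	\Ai(-t) &= \frac{1}{\sqrt\pi\, t^{1/4}}\left[\cos\left(\tfrac23 t^{3/2}-\tfrac\pi4\right) + O(t^{-3/2})\right], \\
	\Ai'(-t) &= \frac{t^{1/4}}{\sqrt\pi}\left[\sin\left(\tfrac23 t^{3/2}-\tfrac\pi4\right) + O(t^{-3/2})\right],
\end{align}
applied with $t = v^{2/3}\zeta$, so that $\tfrac23 t^{3/2} = \tfrac23 v\zeta^{3/2} = v w(z)$ by the definition $\zeta = [\tfrac32 w(z)]^{2/3}$. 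This is the conceptual heart of the argument: the phase $\tfrac23 t^{3/2}$ collapses exactly to $v w(z)$.

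Next I would substitute these into \eqref{eq:balogh}. The $\Ai$ term contributes, after multiplying by the prefactor $\frac{\pi\sqrt2}{v^{1/3}}\big(\frac{\zeta}{1-z^2}\big)^{1/4}$ and using $t^{1/4}=v^{1/6}\zeta^{1/4}$, a main term of size
\[
	\frac{\pi\sqrt2}{v^{1/3}}\cdot\frac{\zeta^{1/4}}{(1-z^2)^{1/4}}\cdot\frac{1}{\sqrt\pi\,v^{1/6}\zeta^{1/4}}\cos\left(vw(z)-\tfrac\pi4\right) = \frac{\sqrt{2\pi}}{v^{1/2}(1-z^2)^{1/4}}\cos\left(vw(z)-\tfrac\pi4\right),
\]
which matches the leading term of \eqref{eq:kbessel-asy-small}. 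The $[1+A(z)/v^2]$ correction multiplies this by $1+O(v^{-2})$, contributing an error of order $v^{-1/2}\cdot v^{-2}=v^{-5/2}$, absorbed into the stated error. The $\Ai'$ term, after multiplying by $(\tfrac23)^{1/3}\frac{B(z)}{v(vw(z))^{1/3}}$ and the prefactor, and using $(\tfrac23)^{1/3}\zeta^{1/4}\cdot v^{1/6}\zeta^{1/4} / (v^{1/3}\cdot v\cdot v^{1/3}w(z)^{1/3})$ together with $\zeta^{3/2}=\tfrac32 w(z)$, yields a term of size $v^{-3/2}$ with the factor $\frac{\sqrt{2\pi}}{(1-z^2)^{1/4}}\sin(vw(z)-\tfrac\pi4)\cdot\frac{B(z)}{w(z)}$ times a combinatorial constant; using the definition \eqref{B0def} of $B(z)$ one checks the constant works out so that the non-$1/w(z)$ part of $B(z)$, namely $\frac{3z^2+2}{24(1-z^2)^{3/2}}$, produces exactly the second term in \eqref{eq:kbessel-asy-small}. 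The leftover piece of $B(z)$, the $-\frac{5}{72w(z)}$ term, together with the Airy error terms $O(t^{-3/2})=O(v^{-1})$ (which when multiplied by the $v^{-1/2}$ prefactor give $O(v^{-3/2})$, not quite small enough) — here I would need to be slightly more careful.

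The main obstacle will be bookkeeping the error terms to land exactly at $O(v^{-5/2})$ rather than the naive $O(v^{-3/2})$ that the crude Airy error $O(t^{-3/2})$ suggests. The resolution is to carry \emph{one more term} in the Airy asymptotics — i.e.~use
\[
	\Ai(-t) = \frac{1}{\sqrt\pi\,t^{1/4}}\left[\cos\left(\tfrac23 t^{3/2}-\tfrac\pi4\right) + \tfrac{5}{48}t^{-3/2}\sin\left(\tfrac23 t^{3/2}-\tfrac\pi4\right) + O(t^{-3})\right]
\]
and the companion expansion for $\Ai'(-t)$ — so that the $v^{-3/2}$-order terms coming from the subleading Airy coefficients combine with the $-\frac{5}{72w(z)}$ piece of $B(z)$ and the $O(v^{-7/2}(1-z^2)^{-1/4})$ error in \eqref{eq:balogh} (which is $O(v^{-7/2})$ here since $1-z^2\asymp 1$). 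These $v^{-3/2}$ contributions must cancel; verifying this cancellation is the one genuine computation, and it is forced — Balogh's expansion is internally consistent, so the $v^{-3/2}$ terms that are not displayed in \eqref{eq:kbessel-asy-small} must vanish identically. After that cancellation, the next surviving error is $O(v^{-5/2})$ from the $A(z)/v^2$ term against the leading $v^{-1/2}$, giving the claimed bound. Throughout, all implied constants are absolute because $z$ ranges over the compact-away-from-$1$ interval $(0,3/4]$, so $w(z)$, $\zeta$, $A(z)$, $B(z)$, and $1-z^2$ are all bounded above and below.
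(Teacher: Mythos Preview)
Your approach is the paper's: insert the two-term Airy asymptotics (NIST (9.7.9), \S 9.7(iii)) into Balogh's formula \eqref{eq:balogh}, and the subleading Airy coefficient $\tfrac{5}{72vw(z)}$ combines with $B(z)$ via \eqref{B0def} to give exactly $\tfrac{3z^2+2}{24v(1-z^2)^{3/2}}$---the paper states this one line as $\tfrac{5}{72w(z)}+B(z)=\tfrac{3z^2+2}{24(1-z^2)^{3/2}}$ rather than framing it as a cancellation. One small correction: $w(z)$ and $\zeta$ are \emph{not} bounded above on $(0,3/4]$ (both blow up as $z\to 0^+$), only below; this is harmless, since only the lower bound $w(z)\gg 1$ is needed to justify the Airy asymptotics and the $w$-dependence in the prefactors cancels exactly, but you should not claim $w(z)\asymp 1$.
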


\begin{proof}
	Let $c(v,z) = \cos(vw(z) - \pi/4)$ and $s(v,z) = \sin(vw(z)-\pi/4)$.
	By the asymptotic expansions \cite[(9.7.9) and \S 9.7(iii)]{nist} we have
	\begin{gather}
		\Ai(-v^{\frac 23}\zeta) = \pi^{-\frac 12}\left(\mfrac 32vw(z)\right)^{-\frac 16} \left\{c(v,z) + \frac{5s(v,z)}{72vw(z)} + O\left(v^{-2}\right)\right\}, \label{eq:Ai-asymp} \\
		\pmfrac 23^{\frac 13}\frac{\Ai'(-v^{\frac 23}\zeta)}{(vw(z))^{\frac 13}} = \pi^{-\frac 12}\left(\mfrac 32vw(z)\right)^{-\frac 16} \left\{s(v,z) + O\left(v^{-1}\right)\right\}. \label{eq:Ai'-asymp}
	\end{gather}
	Thus by \eqref{eq:balogh} we have
	\begin{equation}
		e^{\frac{\pi v}{2}} K_{iv}(vz) 
		= \\ \frac{\sqrt {2\pi}}{v^{\frac 12} (1-z^2)^{\frac 14}}
		\left\{ 
		c(v,z) + \frac {s(v,z)}v \left(\frac{5}{72w(z)}
		 + 
		B(z)\right)\right\} + 
		O\left(v^{-\frac 52}\right).
	\end{equation}
	Then \eqref{B0def} shows that
	\begin{equation}
		\frac{5}{72w(z)} + B(z) = \frac{3z^2+2}{24(1-z^2)^{\frac 32}},
	\end{equation}
	as desired.
\end{proof}

We require some notation for the next proposition.
Let $J_\nu(x)$ and $Y_\nu(x)$ denote the $J$ and $Y$-Bessel functions, and define 
\[
	M_{\nu}(x) = \sqrt{J_\nu^2(x)+Y_\nu^2(x)}.
\]
\begin{proposition} \label{prop:kbessel-asy-transition}
Suppose that $c>0$.  
Suppose that $v\geq 1$ and that $\frac 3{16}\leq z\leq 1-cv^{-\frac 23}$. Then
\begin{equation} \label{eq:kbessel-asy-transition}
	e^{\frac{\pi v}{2}} K_{iv}(vz) = 
	\pi \frac{w(z)^{\frac 12}}{(1-z^2)^{\frac 14}} M_{\frac 13}(vw(z)) \sin\left(\theta_{\frac 13}(vw(z))\right) + O_c(v^{-4/3}),
\end{equation}
where
$\theta_{\frac 13}(x)$ is a real-valued continuous function satisfying
\begin{equation} \label{eq:theta-1/3-prime}
	\theta_{\frac 13}'(x) = \frac{2}{\pi x M_{\frac 13}^2(x)}.
\end{equation}
\end{proposition}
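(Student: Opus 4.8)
The plan is to start from Balogh's uniform expansion \eqref{eq:balogh}, discard the terms that are $O_c(v^{-4/3})$ on the stated range, and recognise the surviving main term as a Bessel function of order $\tfrac13$ written in modulus--phase form. Throughout write $x=vw(z)$ and $\xi=v^{2/3}\zeta=\bigl(\tfrac32 x\bigr)^{2/3}$, so that $\tfrac23\xi^{3/2}=x$. From $w'(z)=-\sqrt{1-z^2}/z$ and $w(1)=0$ one has $w(z)\asymp(1-z)^{3/2}$ and $1-z^2\asymp 1-z$ uniformly on $[\tfrac3{16},1)$; hence $\zeta\asymp 1-z$, the prefactor $\tfrac{\pi\sqrt2}{v^{1/3}}\bigl(\tfrac{\zeta}{1-z^2}\bigr)^{1/4}\asymp v^{-1/3}$, and the hypothesis $z\le 1-cv^{-2/3}$ becomes exactly $\xi\gg_c 1$ (equivalently $x\gg_c 1$), so $\xi$ stays away from the turning point of $\Ai$.

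\textbf{Step 1 (reduction to the leading Airy term).} Using the bounds $|\Ai(-\xi)|\ll(1+\xi)^{-1/4}$ and $|\Ai'(-\xi)|\ll(1+\xi)^{1/4}$ together with $A(z),B(z)=O(1)$ (recorded after \eqref{B0def}), a direct power count --- cleanest when everything is tracked through $x\asymp v(1-z)^{3/2}$ --- shows that the $\Ai'$-term in \eqref{eq:balogh} is $\ll_c v^{-3/2}(1-z)^{-1/4}\ll_c v^{-4/3}$, the $A(z)/v^2$ correction to the $\Ai$-term is $\ll_c v^{-7/3}$, and Balogh's remainder $O\bigl(v^{-7/2}(1-z^2)^{-1/4}\bigr)$ is $\ll_c v^{-10/3}$. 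Thus
\[
	e^{\frac{\pi v}{2}}K_{iv}(vz)=\frac{\pi\sqrt2}{v^{1/3}}\Bigl(\frac{\zeta}{1-z^2}\Bigr)^{1/4}\Ai\bigl(-v^{2/3}\zeta\bigr)+O_c\bigl(v^{-4/3}\bigr).
\]

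\textbf{Step 2 (from Airy to $M_{1/3},\theta_{1/3}$).} Insert the classical identity $\Ai(-\xi)=\tfrac13\xi^{1/2}\bigl(J_{1/3}(\tfrac23\xi^{3/2})+J_{-1/3}(\tfrac23\xi^{3/2})\bigr)$, whose Bessel arguments equal $x$. Since $\xi^{1/2}=v^{1/3}\zeta^{1/2}$ and $\zeta^{3/4}=\bigl(\tfrac32 w(z)\bigr)^{1/2}$, the powers of $v$ and $\zeta$ cancel and the main term becomes $\tfrac{\pi\sqrt3\,w(z)^{1/2}}{3(1-z^2)^{1/4}}\bigl(J_{1/3}(x)+J_{-1/3}(x)\bigr)$. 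The reflection formula $J_{-1/3}=\cos(\tfrac\pi3)J_{1/3}-\sin(\tfrac\pi3)Y_{1/3}$ gives $J_{1/3}+J_{-1/3}=\tfrac32 J_{1/3}-\tfrac{\sqrt3}{2}Y_{1/3}$, so the main term is $\tfrac{\pi\,w(z)^{1/2}}{(1-z^2)^{1/4}}\bigl(\tfrac{\sqrt3}{2}J_{1/3}(x)-\tfrac12 Y_{1/3}(x)\bigr)$. Writing $J_{1/3}(x)=M_{1/3}(x)\cos\theta_{1/3}(x)$ and $Y_{1/3}(x)=M_{1/3}(x)\sin\theta_{1/3}(x)$ for a continuous branch $\theta_{1/3}$, the Wronskian $J_\nu Y_\nu'-J_\nu'Y_\nu=\tfrac2{\pi x}$ forces $\theta_{1/3}'(x)=\tfrac{2}{\pi x M_{1/3}^2(x)}$ (this is the phase of the statement, the additive constant being at our disposal). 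Finally $\tfrac{\sqrt3}{2}\cos\theta-\tfrac12\sin\theta=\cos(\theta+\tfrac\pi6)=\sin(\theta+\tfrac{2\pi}3)$, so absorbing the constant $\tfrac{2\pi}3$ into $\theta_{1/3}$ turns the main term into $\pi\tfrac{w(z)^{1/2}}{(1-z^2)^{1/4}}M_{1/3}(vw(z))\sin\theta_{1/3}(vw(z))$, which is \eqref{eq:kbessel-asy-transition}.

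\textbf{Main obstacle.} Everything except the uniformity of Step~1 is routine. The delicate point is that near the upper endpoint $z=1-cv^{-2/3}$ one has $1-z\asymp v^{-2/3}$, and there the discarded $\Ai'$-term is of size \emph{exactly} $\asymp_c v^{-4/3}$; so the estimates must be carried out keeping careful track of the joint dependence on $v$ and $1-z$, which is why parametrising by $x=vw(z)$ is convenient. The finer bound $B(z)=O\bigl((1-z)^{1/2}\bigr)$ --- which holds because the uniform expansion forces $B$ to be analytic across the turning point $z=1$, so that the apparent $(1-z)^{-1/2}$ pole in \eqref{B0def} cancels --- would push the $\Ai'$-term down to $\ll_c v^{-3/2}(1-z)^{1/4}\ll_c v^{-3/2}$ and give a cleaner error, but $B(z)=O(1)$ already suffices for the bound stated in the proposition.
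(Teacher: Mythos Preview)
Your argument is correct and follows the same route as the paper: start from Balogh's expansion \eqref{eq:balogh}, use $vw(z)\gg_c 1$ on the stated range to discard the $A(z)$-, $B(z)$- and remainder terms as $O_c(v^{-4/3})$, then rewrite the surviving $\Ai$-term in Bessel modulus--phase form. The only cosmetic difference is that the paper quotes NIST identities for $\Ai(-\xi)$ in terms of $M_{1/3}$ and the Airy phase directly, whereas you derive the same thing from $\Ai(-\xi)=\tfrac13\xi^{1/2}(J_{1/3}+J_{-1/3})$ and the reflection formula; your explicit Wronskian computation of $\theta_{1/3}'$ actually matches the statement more carefully than the paper's own display (which omits the factor of $x$).
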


\begin{proof}
	By \cite[(9.8.1) and (9.8.9)]{nist} we have
	\begin{equation}
		\Ai(-v^{\frac 23}\zeta) 
		= \frac 1{\sqrt{3}} v^{\frac 13}\zeta^{\frac 12} M_{\frac 13}^2(vw(z)) \sin\left[\theta\left(-\left(\mfrac 32vw(z)\right)^{\frac 23}\right)\right],
	\end{equation}
	where $\theta(x)$ is given in \cite[(9.8.11)]{nist}.
	Letting $\theta_{1/3}(x) = \theta\left(-(\frac 32x)^{2/3}\right)$ we find from \cite[(9.8.14)]{nist} that
	\begin{equation}
		\theta_{\frac 13}'(x) = - \left(\mfrac 32x\right)^{-\frac 13} \theta'\left(-\left(\mfrac 32vw(z)\right)^{\frac 23}\right) = \frac{2}{\pi M_{\frac 13}^2(x)}.
	\end{equation}
	For $z \leq 1-cv^{-2/3}$ we have $vw(z)\gg_c 1$.
	Thus it follows from \eqref{eq:Ai-asymp} and \eqref{eq:Ai'-asymp} that after expanding \eqref{eq:balogh}, the terms containing $A(z)$ and $B(z)$ are both $\ll v^{-4/3}$.
	The proposition follows.
\end{proof}

It remains to prove the first estimate of Theorem~\ref{thm:phicheck}.

\begin{proposition}
	Let $\varphi$ be as in Theorem~\ref{thm:phicheck}.
	If $r \leq 1$ then 
	\begin{equation}
		\check\varphi(r) \ll r^{-\frac 32}.
	\end{equation}
\end{proposition}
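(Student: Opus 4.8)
The plan is to estimate $\check\varphi(r) = 2\cosh(\pi r)\int_0^\infty K_{2ir}(x)\varphi(x)\,\frac{dx}{x}$ directly for $r\le 1$, exploiting that $\varphi$ is supported on the interval $[\frac{a}{2x+2T},\frac{a}{x-T}]$ — which is where the variable $t$ lives, not the spectral parameter $r$. Since $r\le 1$, the factor $\cosh(\pi r)$ is $O(1)$ and $K_{2ir}$ is essentially a $K$-Bessel function of small imaginary order, so there is no oscillation-in-$r$ to harvest; what we need is purely a bound on $\int_0^\infty |K_{2ir}(x)|\,\varphi(x)\,\frac{dx}{x}$, and the claimed $r^{-3/2}$ must come from the behaviour of $K_{2ir}(x)$ as $x\to 0$. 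First I would recall the standard small-argument expansion $K_{2ir}(x) = -\frac{\pi}{2r\sinh(2\pi r)}\bigl(\text{oscillatory in }\log x\bigr) + \dots$, more precisely that for $0<x\lesssim 1$ one has $K_{2ir}(x) \ll \frac{1}{\sqrt r}\,|x|^{-0}$ type bounds; the cleanest route is to use the integral representation $K_{2ir}(x) = \int_0^\infty e^{-x\cosh t}\cos(2rt)\,dt$ together with the reflection/normalization that produces the $\Gamma$-factors, giving $|K_{2ir}(x)| \ll r^{-1/2}$ uniformly for $x\ge$ some fixed constant and $|K_{2ir}(x)| \ll r^{-1/2}\log(2/x) + r^{-1}\,(\text{something bounded})$ for small $x$.

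The key steps, in order: (1) split the $t$-integral at $t=1$, say: for $x\gg 1$ one uses $K_{2ir}(x)\ll e^{-x/2}$ and the contribution is negligible; (2) for the bulk range $c v^{-2/3}\ll$ — here I should rather say: since $a = 4\pi\sqrt{|mn|}$ and the support of $\varphi$ forces $t\asymp a/x$, and we are not assuming any relation between $a/x$ and $1$, the honest thing is to split according to whether $t\gtrsim 1$ or $t\lesssim 1$; (3) on the part of the support where $t\gtrsim 1$, use $|K_{2ir}(t)|\ll t^{-1/2}e^{-t}$ and $\varphi\le 1$, so $\int |K_{2ir}(t)|\varphi(t)\frac{dt}{t}\ll 1$, which is $\ll r^{-3/2}$ trivially since $r\le 1$; (4) on the part where $t\lesssim 1$, use the small-argument bound $|K_{2ir}(t)|\ll r^{-1}$ valid uniformly for $0<t\le 1$, $0<r\le 1$ (this follows from $K_{2ir}(t) = \frac12\Gamma(2ir)(t/2)^{-2ir} + \frac12\Gamma(-2ir)(t/2)^{2ir} + O(t^2\cdot\text{bdd})$ and $|\Gamma(\pm 2ir)|\asymp r^{-1}$ for small $r$); then $\int_{t\lesssim 1}|K_{2ir}(t)|\varphi(t)\frac{dt}{t}$ — this is where one must be careful, because $\frac{dt}{t}$ is not integrable at $0$. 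Here I would use property (ii), that $\varphi$ vanishes for $t\le \frac{a}{2x+2T}$, so the lower limit of integration is bounded below by a positive quantity; combined with $\varphi\ll 1$ and $|K_{2ir}(t)|\ll r^{-1}$ this gives $\ll r^{-1}\log(1/\text{lower limit})$, which is not obviously $\ll r^{-3/2}$ without more care.

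The main obstacle — and the step I expect to consume most of the work — is reconciling the logarithmic blow-up from $\int \frac{dt}{t}$ near the left edge of the support with the target bound $r^{-3/2}$. The resolution should be that one does \emph{not} bound $|K_{2ir}(t)|$ by its size but rather integrates by parts (or uses the full oscillatory structure): write $\int K_{2ir}(t)\varphi(t)\frac{dt}{t}$ and integrate by parts once using an antiderivative of $K_{2ir}(t)/t$, picking up $\varphi'$ which is supported on the two transition intervals of length $\asymp \frac{x^2}{aT}\cdot$(scale) and satisfies $\varphi'\ll \frac{x^2}{aT}$; the boundary terms vanish. One then needs a bound for the antiderivative $\int_0^t K_{2ir}(u)\frac{du}{u}$ that is $\ll r^{-3/2}$ (or better) uniformly — this is a known estimate, essentially that $\int_0^\infty K_{2ir}(u)\frac{du}{u} = \frac{\pi^2}{2r\sinh(\pi r)\cdot(\dots)}$-type closed form giving size $r^{-2}$, but the \emph{truncated} integral behaves better. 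I would look up or rederive the identity $\int_0^\infty K_{2ir}(u)u^{s-1}\,du = 2^{s-2}\Gamma(\frac{s}{2}+ir)\Gamma(\frac{s}{2}-ir)$ and take $s\to 0^+$ carefully, or simply cite the corresponding lemma from \cite{sarnak-tsimerman} or \cite{aa-kloosterman} where exactly this small-$r$ regime is handled. The honest plan is therefore: reduce to bounding a truncated Mellin-type integral of $K_{2ir}$, invoke the closed form $2^{s-2}\Gamma(\frac s2+ir)\Gamma(\frac s2-ir)$ together with the estimate $|\Gamma(\tfrac s2\pm ir)|\ll r^{-1/2}$ in the relevant range (for $s$ bounded away from $0$) and a $\log$-saving on the truncation, and conclude $\check\varphi(r)\ll \cosh(\pi r)\cdot r^{-1/2}\cdot(\text{mild})\ll r^{-3/2}$; the bulk of the writing will be making the interchange of the $\Gamma$-asymptotics with the support/truncation precise.
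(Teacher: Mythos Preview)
Your approach through step (4) is essentially the paper's argument, and it already succeeds; the ``main obstacle'' you describe is not an obstacle at all. The paper uses the same pointwise bound you derive, namely $|K_{2ir}(u)|\ll r^{-1}$ for $0<u\le 1$ and $0<r\le 1$ (they get it from $|K_{2ir}(u)|\le \frac{\pi\cosh(2\pi r)^{1/2}}{\sinh(2\pi r)}I_0(u)$, but your $\Gamma$-function argument is fine too). The point you missed is that the logarithm is bounded: since $\varphi$ is supported on $\bigl[\frac{a}{2x+2T},\frac{a}{x-T}\bigr]$ and $T\le x/3$, the ratio of the endpoints is
\[
\frac{a/(x-T)}{a/(2x+2T)}=\frac{2x+2T}{x-T}\le \frac{8x/3}{2x/3}=4,
\]
so $\int_{\operatorname{supp}\varphi}\frac{du}{u}\le \log 4$. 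Hence your step (4) already gives $\check\varphi(r)\ll r^{-1}$, which is stronger than the claimed $r^{-3/2}$. The entire integration-by-parts/Mellin plan at the end is unnecessary.

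The only structural difference from the paper is the split: the paper cuts the integration at $u=r$ rather than $u=1$, handles $u\le r$ exactly as above, and for $u>r$ simply refers to the existing proof of \cite[Proposition~6.5]{aa-kloosterman} (valid there since the argument exceeds the order). Your split at $u=1$ works equally well and is arguably more self-contained.
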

\begin{proof}
	We begin by splitting the integral as
	\begin{align}
		\check\varphi(r) 
		&= \check\varphi_1(r) + \check\varphi_2(r),
	\end{align}
	where
	\begin{equation}
		\check\varphi_j(r) = \cosh\pi r \int_{I_j} K_{2ir}(u) \varphi(u) \, \mfrac{du}{u},
	\end{equation}
	with $I_1 = [\frac{a}{2(x+T)},\frac{a}{x-T}] \cap [0,r]$ and $I_2 = [\frac{a}{2(x+T)},\frac{a}{x-T}] \cap (r,\infty)$.
	The second piece $\check\varphi_2(r)$ can be estimated exactly as in the proof of Proposition~6.5 of \cite{aa-kloosterman} since $u\geq r$.
	For the first piece we use \cite[(10.27.4) and (10.25.2)]{nist} to get
	\begin{equation}
		|K_{2ir}(u)| \leq \frac{\pi \cosh(2\pi r)^{1/2}}{\sinh(2\pi r)} I_0(u).
	\end{equation}
	From this and the assumption that $r\leq 1$ we obtain
	\begin{equation}
		\check\varphi_1(r) \ll \frac{1}{r} \int_{\frac{a}{2(x+T)}}^{\frac{a}{x-T}} \frac{du}{u} \ll \frac 1r,
	\end{equation}
	where in the last inequality we used that $T\leq x/3$.
	This completes the proof.
\end{proof}


\bibliographystyle{amsplain}

\begin{thebibliography}{10}

\bibitem{aa-kloosterman}
Scott Ahlgren and Nickolas Andersen, \emph{Kloosterman sums and {M}aass cusp
  forms of half integral weight for the modular group}, Int. Math. Res. Not.
  IMRN (2018), no.~2, 492--570. 

\bibitem{andersen-duke-kloosterman}
Nickolas Andersen and William~D. Duke, \emph{Modular invariants for real
  quadratic fields and {K}loosterman sums}, Algebra Number Theory \textbf{14}
  (2020), no.~6, 1537--1575. 

\bibitem{balogh}
Charles~B. Balogh, \emph{Uniform asymptotic expansions of the modified {B}essel
  function of the third kind of large imaginary order}, Bull. Amer. Math. Soc.
  \textbf{72} (1966), 40--43.

\bibitem{baruch-mao}
Ehud~Moshe Baruch and Zhengyu Mao, \emph{A generalized {K}ohnen-{Z}agier
  formula for {M}aass forms}, J. Lond. Math. Soc. (2) \textbf{82} (2010),
  no.~1, 1--16.

\bibitem{blomer}
Valentin Blomer, \emph{Sums of {H}ecke eigenvalues over values of quadratic
  polynomials}, Int. Math. Res. Not. IMRN (2008), no.~16, Art. ID rnn059. 29.
  

\bibitem{bjo}
Jan~Hendrik Bruinier, Paul Jenkins, and Ken Ono, \emph{Hilbert class
  polynomials and traces of singular moduli}, Math. Ann. \textbf{334} (2006),
  no.~2, 373--393. 

\bibitem{nist}
\emph{{NIST Digital Library of Mathematical Functions}}, http://dlmf.nist.gov/,
  Release 1.0.10 of 2015-08-07, Online companion to \cite{nist-book}.

\bibitem{duke-half-integral}
W.~Duke, \emph{Hyperbolic distribution problems and half-integral weight
  {M}aass forms}, Invent. Math. \textbf{92} (1988), no.~1, 73--90.

\bibitem{duke-24}
\bysame, \emph{Modular functions and the uniform distribution of {CM} points},
  Math. Ann. \textbf{334} (2006), no.~2, 241--252. 

\bibitem{dfi-weyl}
W.~Duke, J.~B. Friedlander, and H.~Iwaniec, \emph{Weyl sums for quadratic
  roots}, Int. Math. Res. Not. IMRN (2012), no.~11, 2493--2549. 

\bibitem{DIT-cycle}
W.~Duke, {\"O}.~Imamo{\=g}lu, and {\'A}.~T{\'o}th, \emph{Cycle integrals of the
  {$j$}-function and mock modular forms}, Ann. of Math. (2) \textbf{173}
  (2011), no.~2, 947--981. 

\bibitem{moonshine}
John F.~R. Duncan, Michael~J. Griffin, and Ken Ono, \emph{Moonshine}, Res.
  Math. Sci. \textbf{2} (2015), Art. 11, 57. 

\bibitem{GST}
Amparo Gil, Javier Segura, and Nico~M. Temme, \emph{Computation of the modified
  {B}essel function of the third kind of imaginary orders: uniform {A}iry-type
  asymptotic expansion}, Proceedings of the {S}ixth {I}nternational {S}ymposium
  on {O}rthogonal {P}olynomials, {S}pecial {F}unctions and their {A}pplications
  ({R}ome, 2001), vol. 153, 2003, pp.~225--234. 

\bibitem{kaneko-1}
Masanobu Kaneko, \emph{The {F}ourier coefficients and the singular moduli of
  the elliptic modular function {$j(\tau)$}}, no. 965, 1996, Automorphic forms
  on algebraic groups (Japanese) (Kyoto, 1995), pp.~172--177. 

\bibitem{kaneko-2}
\bysame, \emph{Traces of singular moduli and the {F}ourier coefficients of the
  elliptic modular function {$j(\tau)$}}, Number theory ({O}ttawa, {ON}, 1996),
  CRM Proc. Lecture Notes, vol.~19, Amer. Math. Soc., Providence, RI, 1999,
  pp.~173--176. 

\bibitem{kohnen-newforms}
Winfried Kohnen, \emph{Newforms of half-integral weight}, J. Reine Angew. Math.
  \textbf{333} (1982), 32--72. 

\bibitem{kuznetsov}
N.~V. Kuznetsov, \emph{The {P}etersson conjecture for cusp forms of weight zero
  and the {L}innik conjecture. {S}ums of {K}loosterman sums}, Mat. Sb. (N.S.)
  \textbf{111(153)} (1980), 334--383. 

\bibitem{nist-book}
F.~W.~J. Olver, D.~W. Lozier, R.~F. Boisvert, and C.~W. Clark (eds.),
  \emph{{NIST Handbook of Mathematical Functions}}, Cambridge University Press,
  New York, NY, 2010, Print companion to \cite{nist}.

\bibitem{petersson}
Hans Petersson, \emph{\"{U}ber die {E}ntwicklungskoeffizienten der automorphen
  {F}ormen}, Acta Math. \textbf{58} (1932), no.~1, 169--215. 

\bibitem{proskurin-new}
N.~V. Proskurin, \emph{On general {K}loosterman sums}, Zap. Nauchn. Sem.
  S.-Peterburg. Otdel. Mat. Inst. Steklov. (POMI) \textbf{302} (2003), no.~19,
  107--134.

\bibitem{rademacher-j}
Hans Rademacher, \emph{The {F}ourier {C}oefficients of the {M}odular
  {I}nvariant {J}({$\tau$})}, Amer. J. Math. \textbf{60} (1938), no.~2,
  501--512. 

\bibitem{sarnak-tsimerman}
Peter Sarnak and Jacob Tsimerman, \emph{On {L}innik and {S}elberg's conjecture
  about sums of {K}loosterman sums}, Algebra, arithmetic, and geometry: in
  honor of {Y}u. {I}. {M}anin. {V}ol. {II}, Progr. Math., vol. 270,
  Birkh{\"a}user Boston, Inc., Boston, MA, 2009, pp.~619--635.

\bibitem{young-subconvexity}
Matthew~P. Young, \emph{Weyl-type hybrid subconvexity bounds for twisted
  {$L$}-functions and {H}eegner points on shrinking sets}, J. Eur. Math. Soc.
  (JEMS) \textbf{19} (2017), no.~5, 1545--1576. 

\bibitem{zagier-traces}
Don Zagier, \emph{Traces of singular moduli}, Motives, polylogarithms and
  {H}odge theory, {P}art {I} ({I}rvine, {CA}, 1998), Int. Press Lect. Ser.,
  vol.~3, Int. Press, Somerville, MA, 2002, pp.~211--244. 

\end{thebibliography}

\providecommand{\bysame}{\leavevmode\hbox to3em{\hrulefill}\thinspace}
\providecommand{\MR}{\relax\ifhmode\unskip\space\fi MR }

\end{document}